\tikzset{every picture/.style={line width=0.75pt}} 
\numberwithin{equation}{section}
\theoremstyle{plain}
\newtheorem{lemma}{Lemma}[section]
\newtheorem{lemma-defi}[lemma]{Lemma and Definition}
\newtheorem{proposition}[lemma]{Proposition}
\newtheorem{theorem}[lemma]{Theorem}
\newtheorem{corollary}[lemma]{Corollary}
\newtheorem{theoremABC}{Theorem}
\theoremstyle{remark}
\newtheorem{remark}[lemma]{Remark}
\newtheorem*{notation*}{Notation}
\newtheorem{question}[lemma]{Question}
\theoremstyle{definition}
\newtheorem{definition}[lemma]{Definition}
\newtheorem{example}[lemma]{Example}
 \newtheorem{conjecture}[lemma]{Conjecture}
\DeclareMathOperator{\supp}{supp}
\DeclareMathOperator{\dc}{dc}
\DeclareMathOperator{\dens}{\delta}
\DeclareMathOperator{\It}{It}
\DeclareMathOperator{\maxit}{maxi}
\DeclareMathOperator{\minit}{mini}
\def\moverlay{\mathpalette\mov@rlay}
\def\mov@rlay#1#2{\leavevmode\vtop{%
   \baselineskip\z@skip \lineskiplimit-\maxdimen
   \ialign{\hfil$\m@th#1##$\hfil\cr#2\crcr}}}
\newcommand{\charfusion}[3][\mathord]{
    #1{\ifx#1\mathop\vphantom{#2}\fi
        \mathpalette\mov@rlay{#2\cr#3}
      }
    \ifx#1\mathop\expandafter\displaylimits\fi}
\newcommand{\cupdot}{\charfusion[\mathbin]{\cup}{\cdot}}
\def\author@andify{%
  \nxandlist {\unskip ,\penalty-1 \space\ignorespaces}%
    {\unskip {} \@@and~}%
    {\unskip \penalty-2 \space \@@and~}%
}
\title[Degree of commutativity of wreath products]{The degree of
  commutativity of wreath products with infinite cyclic top group}
\author[I. de las Heras]{Iker de las Heras} 
\address{Iker de las Heras: Mathematisches Institut, Heinrich-Heine-Universit\"at, 40225
  D\"usseldorf, Germany; Department of Mathematics, University
  of the Basque Country UPV/EHU, 48940 Leioa, Spain}
\email{iker.delasheras@hhu.de}
\author[B. Klopsch]{Benjamin Klopsch} 
\address{Benjamin Klopsch:
  Mathematisches Institut, Heinrich-Heine-Universit\"at, 40225
  D\"usseldorf, Germany} \email{klopsch@math.uni-duesseldorf.de}
\author[A. Zozaya]{Andoni Zozaya} 
\address{Andoni Zozaya: Department of Mathematics, University
  of the Basque Coun\-try UPV/EHU, 48940 Leioa, Spain}
\email{andoni.zozaya@ehu.eus}
\date{}
\subjclass[2020]{20P05, 20E22, 20F65, 20F69, 20F05}
\keywords{Degree of commutativity, wreath products, density, word
  growth}
\thanks{The first author acknowledges support by the Basque
  Government, grant POS\_2021\_2\_0040.
  The third author is supported by Spanish Ministry of Science, Innovation and Universities' grant FPU17/04822.
  The first and third author acknowledge as well support
  by the Basque Government, project IT483-22, and the Spanish
  Government, project PID2020-117281GB-I00, partly with ERDF funds.
  The authors thank Heinrich-Heine-Universit\"{a}t D\"{u}sseldorf,
  where a large part of this research was carried out.}
\begin{document}

\maketitle

\begin{abstract}
  The degree of commutativity of a finite group is the probability
  that two uniformly and randomly chosen elements commute.  This
  notion extends naturally to finitely generated groups~$G$: the
  degree of commutativity $\dc_S(G)$, with respect to a given finite
  generating set~$S$, results from considering the fractions of
  commuting pairs of elements in increasing balls around $1_G$ in the
  Cayley graph $\mathcal{C}(G,S)$.  We focus on restricted wreath
  products of the form $G = H \wr \langle t \rangle$, where $H \ne 1$
  is finitely generated and the top group $\langle t \rangle$ is
  infinite cyclic.  In accordance with a more general conjecture, we
  show that $\dc_S(G) = 0$ for such groups~$G$, regardless of the
  choice of~$S$.

  This extends results of Cox who considered lamplighter groups with
  respect to certain kinds of generating sets.  We also derive a
  generalisation of Cox's main auxiliary result: in `reasonably large'
  homomorphic images of wreath products $G$ as above, the image of the
  base group has density zero, with respect to certain types of
  generating sets.
\end{abstract}

\section{Introduction}

Let $G$ be a finitely generated group, with finite generating set~$S$.
For $n \in \mathbb{N}_0$, let $B_S(n) = B_{G,S}(n)$ denote the ball of radius
$n$ in the Cayley graph $\mathcal{C}(G,S)$ of $G$ with respect to~$S$.
Following Antol\'in, Martino and Ventura~\cite{AnMaVe17}, we define
the \emph{degree of commutativity} of $G$ with respect to $S$ as
\begin{equation*}
  \dc_S(G) = \limsup_{n \rightarrow
    \infty} \frac{\vert\{ (g,h) \in B_S(n) \times B_S(n) \mid
    gh=hg\}\vert}{\vert B_S(n)\vert^2}.
\end{equation*}
We remark that this notion can be viewed as a special instance of a
more general concept, where the degree of commutativity is defined
with respect to `reasonable' sequences of probability measures on~$G$,
as discussed in a preliminary \texttt{arXiv}-version
of~\cite{AnMaVe17} or, in more detail, by Tointon in~\cite{To20}.

If $G$ is finite, the invariant $\dc_S(G)$ does not depend on the
particular choice of~$S$, as the balls stabilise and
$\dc(G) = \dc_S(G)$ simply gives the probability that two uniformly
and randomly chosen elements of~$G$ commute.  This situation was
studied already by Erd\H{o}s and Tur\'an~\cite{ErTu68}, and further
results were obtained by various authors over the years;
for example, see~\cite{GuRo06, Gu73, MaToVaVe21, Ne89,
    Ru79}.  For infinite groups~$G$, it is generally not known
whether $\dc_S(G)$ is independent of the particular choice of~$S$.

The degree of commutativity is naturally linked to the following
concept of density, which is employed, for instance, in~\cite{BuVe02}.
The \emph{density} of a subset $X\subseteq G$ with respect to $S$ is
\[
  \dens_S(X) = \dens_{G,S}(X) = \limsup_{n \rightarrow \infty} \frac{\vert X \cap
    B_S(n)\vert }{\vert B_S(n)\vert }.
\]
If the group $G$ has sub-exponential word growth, then the density
function~$\dens_S$ is bi-invariant; compare
with~\cite[Prop.~2.3]{BuVe02}.  Based on this fact, the
following can be proved, initially for residually finite groups and
then without this additional restriction, even in the more general
context of suitable sequences of probability measures;
see~\cite[Thm.~1.3]{AnMaVe17} and~\cite[Thms.~1.6 and 1.17]{To20}.

\begin{theorem}[\textup{Antol\'in, Martino and
    Ventura~\cite{AnMaVe17}; Tointon~\cite{To20}}]
  \label{thm:AMV}
  Let $G$ be a finitely generated group of sub-exponential word growth,
  and let $S$ be a finite generating set of~$G$.  Then $\dc_S(G) > 0$
  if and only if $G$ is virtually abelian.  Moreover, $\dc_S(G)$ does
  not depend on the particular choice of~$S$.
\end{theorem}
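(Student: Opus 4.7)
The plan is to use the sub-exponential growth assumption to promote the density function $\dens_S$ to a bi-invariant set-function, and then to convert the hypothesis $\dc_S(G) > 0$ into the existence of a finite-index abelian subgroup. First I would establish bi-invariance, as in~\cite[Prop.~2.3]{BuVe02}: since $gB_S(n) \subseteq B_S(n + |g|_S)$ and sub-exponential growth forces $\liminf_n |B_S(n+k)|/|B_S(n)| = 1$ for every fixed $k \in \mathbb{N}$, a standard subsequence argument yields $\dens_S(gX) = \dens_S(X) = \dens_S(Xg)$ for all $g \in G$ and $X \subseteq G$. A first consequence is that every subgroup $H \leq G$ of positive density has finite index: its cosets all have the same positive density, they are pairwise disjoint, and the total density of $G$ equals $1$.

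For the forward direction, assume $\dc_S(G) = c > 0$. Rewriting
\[
  \frac{|\{(g,h) \in B_S(n)^2 : gh = hg\}|}{|B_S(n)|^2} = \frac{1}{|B_S(n)|}\sum_{g \in B_S(n)} \frac{|C_G(g) \cap B_S(n)|}{|B_S(n)|},
\]
and picking a subsequence along which the left-hand side tends to~$c$, I would then exploit that each inner ratio lies in $[0,1]$ to produce, via an averaging argument, some $\epsilon > 0$ and a set $Y \subseteq G$ with $\dens_S(Y) > 0$ such that $\dens_S(C_G(g)) \geq \epsilon$ for every $g \in Y$. The delicate point is to pass from a $\limsup$-type average to a pointwise lower bound, which requires a diagonal extraction that also interacts correctly with the definition of $\dens_S$.

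By the finite-index criterion established above, each $g \in Y$ satisfies $[G : C_G(g)] < \infty$, so $Y$ is contained in the FC-centre $F$ of $G$. Then $\dens_S(F) > 0$, and hence $[G : F] < \infty$. Being finitely generated (as a finite-index subgroup of a finitely generated group) and an FC-group, $F$ has centre $Z(F)$ of finite index by a theorem of B.\,H.~Neumann, so $Z(F)$ is an abelian subgroup of finite index in~$G$. For the reverse implication, if $A \leq G$ is abelian of finite index, then $\dens_S(A) = 1/[G : A]$ by the same coset argument, and $\dc_S(G) \geq \dens_S(A)^2 > 0$ because every pair in $A \times A$ commutes.

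Finally, for the independence of $\dc_S(G)$ from~$S$ in the virtually abelian case, the ingredients above combine to give a closed formula for $\dc_S(G)$ in terms of the indices $[G : Z(F)]$ and the $G$-conjugacy data of~$F$, none of which depends on~$S$; in the non-virtually-abelian case the value is unambiguously~$0$. I expect the main obstacle to be the second step above, since the relevant sums involve $\limsup$ rather than honest limits and the summands depend on both $g$ and~$n$; this is presumably why~\cite{AnMaVe17} first handles the residually finite case (where one can work modulo finite-index subgroups and obtain cleaner approximations) before Tointon~\cite{To20} removes that restriction by reformulating the whole setup in terms of reasonable sequences of probability measures.
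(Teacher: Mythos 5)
First, note that the paper does not actually prove Theorem~\ref{thm:AMV}: it is imported as a known result from~\cite{AnMaVe17} and~\cite{To20}, so there is no internal proof against which to compare. Judged on its own merits, your sketch has the right architecture---bi-invariance of~$\dens_S$ via sub-exponential growth, the finite-index criterion for positive-density subgroups, a Markov-type averaging to locate a large FC-centre, B.\,H.~Neumann's theorem for finitely generated FC-groups, and the easy converse via a finite-index abelian subgroup---and this matches the broad strategy of the cited works.

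However, the step you flag as ``delicate'' is a genuine gap rather than a routine diagonal extraction, and it is exactly where the cited sources do their work. The difficulty is a two-variable one. Markov's inequality gives, for each $n$ in a subsequence realizing $\dc_S(G)=c$, a set $Y_n \subseteq B_S(n)$ with $\vert Y_n\vert \ge (c/2)\vert B_S(n)\vert$ on which $\vert C_G(g)\cap B_S(n)\vert/\vert B_S(n)\vert \ge c/2$. But knowing this for a single value of~$n$ bounds neither $\dens_S(C_G(g))$, which is a $\limsup$ over all radii, nor $[G:C_G(g)]$, from the relevant side; and the sets $Y_n$ change with~$n$, so no fixed positive-density set~$Y$ of elements with ``globally large'' centraliser falls out of the averaging alone. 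Resolving this is precisely why~\cite{AnMaVe17} first restricts to residually finite groups, passing to finite quotients where the finite-group structure theory applies, and why Tointon~\cite{To20} develops the formalism of suitable sequences of probability measures to remove that restriction. The ``Moreover'' part of the statement, independence of $\dc_S(G)$ from~$S$, is likewise not established by your closing sentence: one would actually need to produce the claimed closed formula and verify it for an arbitrary~$S$, which is a separate nontrivial computation in~\cite{AnMaVe17}.
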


The situation is far less clear for groups of exponential word growth.
In this context, the following conjecture was raised
in~\cite{AnMaVe17}.

\begin{conjecture}[\textup{Antol\'in, Martino and
    Ventura~\cite{AnMaVe17}}]
  \label{conjecture}
  Let $G$ be a finitely generated group of exponential word growth and
  let $S$ be a finite generating set of~$G$.  Then, $\dc_S(G)=0$,
  irrespective of the choice of~$S$.
\end{conjecture}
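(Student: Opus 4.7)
The plan is to argue by contradiction: assume $\dc_S(G) > 0$ and try to show that $G$ must then be virtually abelian, which would force polynomial word growth by Gromov's theorem and contradict the exponential-growth hypothesis. This mirrors the strategy that succeeds in the sub-exponential setting via Theorem~\ref{thm:AMV}; the goal is to identify a replacement for the bi-invariance of $\dens_S$ that is the key input there and is \emph{not} available under exponential growth. A first, essentially formal, step is a Fubini-type reduction: writing
\[
\dc_S(G) = \limsup_{n\to\infty} \frac{1}{|B_S(n)|^2} \sum_{g \in B_S(n)} |C_G(g) \cap B_S(n)|,
\]
one extracts via pigeonholing, along a subsequence $n_k \to \infty$, a set $T_k \subseteq B_S(n_k)$ of proportional size at least $\dc_S(G)/2$ such that each $g \in T_k$ satisfies $|C_G(g) \cap B_S(n_k)| \geq (\dc_S(G)/2) \, |B_S(n_k)|$. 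Thus positive $\dc_S(G)$ is equivalent to the existence of many elements whose centralizers occupy a positive fraction of every sufficiently large ball, along a subsequence.

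The decisive step is then to convert this combinatorial information into algebraic structure, that is, to show that a positive-density set of elements with positive-density centralizers forces a normal abelian subgroup of finite index in~$G$. Under sub-exponential growth, $\dens_S(C_G(g)) > 0$ directly yields $[G : C_G(g)] < \infty$ by bi-invariance of $\dens_S$; under exponential growth this implication is false, so a genuinely different input is needed. Two lines of attack seem plausible. First, within Tointon's framework \cite{To20}, one reinterprets $\dc_S(G)$ as a commuting probability with respect to a weak limit of uniform measures on balls, and asks whether approximate-invariance properties of such a limiting mean suffice to force centralizers to be `thick' in an algebraic sense, for instance to contain an approximate subgroup of positive density. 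Second, one could try to combine many such approximate structures, via additive-combinatorial tools of Breuillard--Green--Tao type, into a single virtually nilpotent subgroup of $G$ of positive density; Gromov's theorem together with the standard reduction from virtually nilpotent with many commuting pairs to virtually abelian would then supply the contradiction with exponential growth.

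The main obstacle, by a wide margin, is this conversion of centralizer density into genuine algebraic structure in the absence of bi-invariance. Every existing proof of a statement of the form ``positive $\dc_S$ implies virtually abelian'' uses bi-invariance of $\dens_S$ essentially, and bi-invariance is equivalent to sub-exponential growth; so producing any substitute --- a \emph{weak} bi-invariance, an approximate-subgroup structure extracted from positive commuting probability, or a direct route from centralizer density to a finite-index normal subgroup --- will almost certainly require a new idea. For this reason I would expect the full conjecture to be out of reach by current methods, and would attack it, as the rest of this paper does, in restricted classes of groups where the absence of bi-invariance can be circumvented by explicit computation: here, wreath products $H \wr \langle t \rangle$, for which the structure of the base group and the action of the top group admit concrete density estimates in finite homomorphic images, replacing the abstract bi-invariance input used in the sub-exponential case.
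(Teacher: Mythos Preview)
The statement you were asked to address is Conjecture~\ref{conjecture}, and the paper does \emph{not} prove it: it is recorded as an open conjecture of Antol\'in, Martino and Ventura, and the paper establishes only the special case of wreath products $H \wr \langle t\rangle$ (Theorem~\ref{thm:main}). Your proposal is therefore not in competition with any proof in the paper, and you are right to conclude that the general statement appears out of reach and that one should retreat to structured classes of groups.

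Your diagnosis of the obstruction is accurate and matches the paper's implicit stance: the sub-exponential argument hinges on bi-invariance of $\dens_S$, which fails exactly under exponential growth, so the implication ``$\dens_S(C_G(g))>0 \Rightarrow [G:C_G(g)]<\infty$'' is unavailable. The Fubini/pigeonhole reduction you sketch is standard and correct as far as it goes; the honest admission that converting centraliser density into algebraic structure would need a genuinely new idea is the right assessment.

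One correction to your final paragraph: the paper does \emph{not} proceed via ``density estimates in finite homomorphic images''. For the wreath products $G=H\wr\langle t\rangle$ it works directly in $G$, via a combinatorial analysis of minimal $S$-expressions (the ``itineraries'' of Definition~\ref{def:itinerary}) to show $\dens_S(N)=0$ for the base group~$N$ (Theorem~\ref{thm:main-density}), and then invokes Cox's observation that centralisers of elements outside $N$ are thin, feeding both facts into Lemma~\ref{lem:two steps}. Theorem~\ref{thm:main-2} does concern homomorphic images, but these are not assumed finite, and the method there is again a direct growth-rate comparison rather than anything resembling a reduction to finite quotients.
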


In~\cite{AnMaVe17} the conjecture was already confirmed for
non-elementary hyperbolic groups, and Valiunas~\cite{Va19} confirmed
it for right-angled Artin groups (and more general graph products of
groups) with respect to certain generating sets. Furthermore,
Cox~\cite{Co18} showed that the conjecture holds, with respect to
\emph{selected} generating sets, for (generalised) lamplighter groups,
that is for restricted standard wreath products of the form
$G = F \wr \langle t \rangle$, where $F \ne 1$ is finite and
$\langle t \rangle$ is an infinite cyclic group.  Wreath products of
such a shape are basic examples of groups of exponential word growth;
in Section~\ref{sec:preliminaries} we briefly recall the wreath
product construction, here we recall that
$G = N \rtimes \langle t \rangle$ with base group
$N = \bigoplus_{i \in \mathbb{Z}} F^{t^i}$.  In the
present paper, we make a significant step forward in two directions,
by confirming Conjecture~\ref{conjecture} for an even wider class of
restricted standard wreath products and with respect to
\emph{arbitrary} generating sets.

\begin{theoremABC}
  \label{thm:main}
  Let $G = H \wr \langle t \rangle$ be the restricted wreath product
  of a finitely generated group~$H \ne 1$ and an infinite cyclic group
  $\langle t \rangle \cong C_\infty$.  Then $G$ has degree of
  commutativity~$\dc_S(G)=0$, for every finite generating set $S$
  of~$G$.
\end{theoremABC}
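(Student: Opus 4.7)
The plan is to express the counting of commuting pairs as a sum over centralisers,
\[
  \dc_S(G) \cdot |B_S(n)|^2 = \sum_{g \in B_S(n)} |C_G(g) \cap B_S(n)|,
\]
and to split this sum according to whether $g$ lies in the base group $N = \bigoplus_{i \in \mathbb{Z}} H^{t^i}$ or in $G \setminus N$. The contribution from $g \in N$ is bounded by $|N \cap B_S(n)| \cdot |B_S(n)|$, and is therefore $o(|B_S(n)|^2)$ as soon as $\dens_S(N) = 0$. The remainder must be handled by showing that $|C_G(g) \cap B_S(n)|$ is small, uniformly for $g \in B_S(n) \setminus N$.

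For the density of $N$, I would invoke the generalisation of Cox's auxiliary result announced in the abstract. Since that result is stated only for ``reasonably large'' homomorphic images of $G$ and for certain types of generating sets, the first task is to pass to a carefully chosen quotient $\bar G = \bar H \wr \langle t \rangle$, arising from a quotient $\bar H$ of $H$. The canonical surjection $\pi \colon G \twoheadrightarrow \bar G$ sends commuting pairs to commuting pairs and satisfies $\pi(B_S(n)) \subseteq B_{\pi(S)}(n)$, which will allow me to compare $\dc_S(G)$ with the corresponding quantity for $\bar G$, modulo controllable losses over the fibres of $\pi$.

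For the non-base contribution, I would exploit the wreath-product commutation formula: any $h = m t^\ell$ commuting with $g = n t^k$, where $k \neq 0$, must satisfy a relation of the form $m^{-1} n \tau^{-k}(m) = \tau^{-\ell}(n)$, with $\tau$ denoting the shift action induced by conjugation by $t$. In the abelian case this collapses to $(\tau^k - 1)(m) = (\tau^\ell - 1)(n)$, and since $\tau^k - 1$ is injective on $N$ for $k \neq 0$, the base-group component $m$ is determined by $\ell$ (when a solution exists at all). As $|\ell|$ is at most linear in $n$ for $h \in B_S(n)$, this forces $|C_G(g) \cap B_S(n)| = O(n)$, which is negligible against the exponentially growing $|B_S(n)|$. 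The non-abelian case requires additional care but can be reduced to the abelian one by passing further to a quotient of the form $H^{\mathrm{ab}} \wr \langle t \rangle$, or handled via a more intrinsic version of the same injectivity.

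The hardest step is the first one: securing the density bound for \emph{arbitrary} finite generating sets $S$. The density theorem of the paper does not apply uniformly; one must select the quotient $\bar H$ of $H$ so that the image $\pi(S)$ falls into the covered class of generating sets, while preserving enough of the original commuting structure to transfer conclusions back to $G$. This calibration---exploiting residual, abelianisation or torsion properties of $H$---is where I expect the genuine technical work of the proof to concentrate.
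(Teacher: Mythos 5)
The overall skeleton you sketch---split according to whether $g$ lies in the base group $N$, reduce to (i) a density bound for $N$ and (ii) a uniform centraliser bound off $N$---is exactly the shape of the paper's argument, which runs through Lemma~\ref{lem:two steps} (due to Antol\'in--Martino--Ventura) with $X = N$. However, the route you propose for securing (i) contains a genuine gap. You plan to obtain the density bound by projecting to a quotient $\bar G = \bar H \wr \langle t\rangle$ and invoking the quotient-flavoured density result (Theorem~\ref{thm:main-2}). That theorem requires the strict inequality $\lambda_{S_0}(H) < \lambda_S(G)$ between exponential growth rates, and Example~\ref{ex:counterexample} in the paper shows this hypothesis is not automatic: for $G = F_2 \times \langle t\rangle$ one has $\dens_S(F_2) = 1/2$, precisely because the rates coincide. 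More seriously, even if one arranges a quotient where the density of $\pi(N)$ vanishes, this does not transfer to $\dens_S(N) = 0$ in $G$ itself, nor does $\dc_{\pi(S)}(\bar G) = 0$ imply $\dc_S(G) = 0$: the fibres of $\pi$ over $N$ can be far larger than the fibres over the complement, and $\pi(B_S(n)) \subseteq B_{\pi(S)}(n)$ only constrains the numerator, not the denominator, of the density quotient. The paper sidesteps all of this by proving Theorem~\ref{thm:main-density} directly for the full wreath product $H \wr \langle t\rangle$ with an \emph{arbitrary} finite generating set $S$, via the itinerary/perturbation machinery of Section~3, with no need to pass to quotients or verify any growth-rate inequality.

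A second, smaller issue is the non-abelian reduction in your step (ii). Passing from $H$ to $H^{\mathrm{ab}}$ loses all information when $H$ is perfect (then $H^{\mathrm{ab}} = 1$ and the quotient is just $\langle t\rangle$), so this cannot be the general argument. The paper instead cites Cox's observation that for $g \in G \smallsetminus N$ the centraliser $C_G(g)$ is cyclic and the translation length of $g$ is bounded away from $0$, from which $\vert C_G(g) \cap B_S(n)\vert$ grows at most linearly uniformly in $g$; this is a cleaner route and requires no reduction to the abelian case. Your direct computation via the shift-cocycle equation is in the right spirit, but as stated only covers abelian $H$.
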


One of the key ideas in~\cite{Co18} is to reduce the desired
conclusion $\dc_S(G) = 0$, for the wreath products
$G = N \rtimes \langle t \rangle$ under consideration, to
the claim that the base group $N$ has density
$\dens_S(N) =0$ in~$G$.  We proceed in a similar way and
derive Theorem~\ref{thm:main} from the following density result, which
constitutes our main contribution.

\begin{theoremABC}
  \label{thm:main-density}
  Let $G = H \wr \langle t \rangle$ be the restricted wreath product
  of a finitely generated group~$H$ and an infinite cyclic group
  $\langle t \rangle \cong C_\infty$.  Then the base group
  $N = \bigoplus_{i\in \mathbb{Z}} H^{t^i}$ has density
  $\dens_S(N)=0$ in $G$, for every finite generating set
  $S$ of~$G$.
\end{theoremABC}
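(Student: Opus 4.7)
The plan is to combine a normal-form analysis of elements of $N\cap B_S(n)$ with a construction producing many elements of $G\setminus N$. First I would write each $s\in S$ uniquely as $s=n_s\,t^{\phi(s)}$ with $n_s\in N$, set $M:=\max_{s\in S}|\phi(s)|$, and fix $L$ with $\supp(n_s)\subseteq[-L,L]$ for all $s\in S$. Since $\langle S\rangle=G$ and $\phi\colon G\twoheadrightarrow\mathbb{Z}$ is surjective, $\gcd\{\phi(s):s\in S\}=1$. Tracking the partial $\phi$-sums along any word of length $n$ in $S$, for every $g\in B_S(n)$ one obtains $|\phi(g)|\le Mn$, $\supp(n_g)\subseteq[-An,An]$ with $A:=M+L$, and $\sum_i|(n_g)_i|_H$ bounded by $O(n)$ in some fixed finite generating set of $H$.

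Next I would pick $u\in G$ with $\phi(u)=1$ and minimal length $L_0:=|u|_S$. Right-multiplication by $u^k$ gives an injection $N\cap B_S(n)\hookrightarrow Nu^k\cap B_S(n+|k|L_0)$, and summing over $|k|\le k_0$ yields the basic inequality
\[
  |B_S(n+k_0L_0)|\;\ge\;(2k_0+1)\,|N\cap B_S(n)|.
\]
This polynomial separation is the natural first bound, but it is by itself insufficient against the exponential growth of $|B_S(n)|$.

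The heart of the proof is an exponential strengthening of this inequality. Given $g\in N\cap B_S(n)$ with $\supp(n_g)\subseteq[-An,An]$ and a ``tail'' $\tau\in G$ with $\phi(\tau)\neq 0$ and $\supp(n_\tau)$ disjoint from $[-An,An]$, the product $g\tau$ lies outside $N$ and the assignment $(g,\tau)\mapsto g\tau$ is injective, since the $N$-part of $g\tau$ decomposes along the disjoint supports of $n_g$ and $n_\tau$. I would then construct an exponential family of such tails by shifting elements of $G$ via powers of $u$ (or of $t$ when efficient) to positions lying entirely to the right of $[-An,An]$. Combined with the exponential word growth of $G$, this yields an inequality of the form $|B_S(n')|\ge|N\cap B_S(n)|\cdot\mu^{\Theta(n)}$ for some $\mu>1$ and $n'=O(n)$, from which $|N\cap B_S(n)|/|B_S(n)|\to 0$ follows.

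The hard part will be to realise this exponential enhancement quantitatively and uniformly in $S$. The cost of shifting to a position outside $[-An,An]$ is linear in $n$ with a coefficient involving $L_0$, which may be arbitrarily large for a generic generating set, and the auxiliary element $u$ itself typically has a non-trivial $N$-part whose support has to be tracked carefully through the conjugations and products. Balancing this cost against the exponential growth of the tail family, so as to force $|N\cap B_S(n)|/|B_S(n)|\to 0$ rather than merely bounding it, is what distinguishes the argument from Cox's treatment of specially chosen generating sets and is where the core of the proof must lie.
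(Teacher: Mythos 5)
Your preliminary steps are sound and parallel the paper's set-up, and your observation that the $u^k$-translation argument only gives a polynomial gain is exactly right. The key step, however, is underdeveloped in a way that is unlikely to be repairable along the lines you sketch. If you multiply $g\in N\cap B_S(n)$ by a tail $\tau$ with $\supp(n_\tau)\cap[-An,An]=\varnothing$, then (unless $n_\tau=1$, which gives only linearly many tails) the geodesic itinerary of $\tau$ must reach beyond $An-C$, so $l_S(\tau)\ge(An-C)/r_S$ is bounded below \emph{linearly} in $n$. Hence $n'$ must be taken to be $(1+C')n$ with $C'$ bounded away from zero, and the quantity you need to dominate, $|B_S(n')|/|B_S(n)|\ge\lambda^{n'-n-o(n)}$, is already close to $|B_S(n'-n)|$ itself up to a subexponential factor. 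So your tail family would have to occupy an exponentially significant portion of $B_S(n'-n)$, despite the constraint that every tail reaches past $An-C$ and has support disjoint from $[-An,An]$ --- a constraint that is exponentially restrictive (already for $\mathbb{Z}\wr\mathbb{Z}$ with the standard generators and $n'=3n$, the elements of $B_S(2n)$ supported beyond $n$ are an exponentially small sliver). The gain is eaten by the length cost; the balancing you flag as the hard part is not merely delicate, because the mechanism of linear-distance shifts puts you on the wrong side of the exponential threshold.

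The paper's proof avoids this entirely by never shifting anything a linear distance. First, Proposition~\ref{prop:small-support} disposes of the elements with small itinerary spread $\maxit_\mathcal{W}(g)-\minit_\mathcal{W}(g)\le q(n)$ (for $q$ slowly growing, $q\in o(\log n)$) by a counting argument in $H$. For the remaining elements, with spread $\ell>q(n)$, it decomposes a geodesic $S$-expression of $g$ at the extremal itinerary points as $g=xyz$ and builds, for each two-element set $J\subseteq[\sigma_g^-,\sigma_g^+]_\mathbb{Z}$, the variant $\ddot g(J)=\dot x(J)\,t^{-2C}\,\dot y(J)^{-1}\,t^{-2C}\,\dot z(J)$: a fixed element $u\ne1$ marks the two positions in $J$, and the only shifts introduced are the two fixed $t^{-2C}$-factors, so $l_S(\ddot g(J))\le l_S(g)+D'$ for a constant $D'=D'(S,u)$ independent of $n$. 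The perturbation cost is thus $O(1)$, not $\Theta(n)$. There are $\binom{\ell+1}{2}$ variants, each determines $g$ up to $\ell+1$ guesses of $\sigma_g^+$, and the variants for distinct $\ell$ land in disjoint cosets $Nt^{\pm(2\ell+4C)}$. The resulting gain factor $\ell/2>q(n)/2$ is only slowly unbounded, yet this suffices: $|R_q^\flat(n)|/|B_S(n)|\le 2|B_S(D')|/q(n)\to0$ by submultiplicativity. This trade --- a \emph{bounded} perturbation cost against a merely unbounded (far from exponential) gain --- is the idea your plan is missing, and it also explains why the preliminary reduction to large itinerary spread is needed: one must have room in the itinerary to place the two-element marker set $J$.
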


The limitation in~\cite{Co18} to special generating sets $S$ of
lamplighter groups $G$ is due to the fact that the arguments used
there rely on explicit minimal length expressions for elements
$g\in G$ with respect to~$S$.  If one restricts to generating sets
which allow control over minimal length expressions in a similar, but
somewhat weaker way, it is, in fact, possible to simplify and
generalise the analysis considerably.  In this way we arrive at the
following improvement of the results in~\cite[\textsection 2.2]{Co18}, for
homomorphic images of wreath products.

\begin{theoremABC}
  \label{thm:main-2}
  Let $G$ be a finitely generated group of exponential word growth of
  the form $G= N \rtimes \langle t \rangle$, where
  
  \smallskip
  
  \begin{enumerate}[\rm (a)]
  \item the subgroup $\langle t \rangle$ is infinite cyclic; 
  \item the normal subgroup
    $N = \langle \bigcup \big\{ H^{t^i} \mid i \in
    \mathbb{Z} \big\} \rangle$ is generated by the
    $\langle t \rangle$-conjugates of a finitely generated
    subgroup $H$ of $N$;
  \item the $\langle t \rangle$-conjugates of this group $H$ commute
    elementwise: $\big[H^{t^i}, H^{t^j} \big] = 1$ for all
    $i, j \in \mathbb{Z}$ with $H^{t^i} \ne H^{t^j}$.
  \end{enumerate}
  
  \smallskip
  
  \noindent Suppose further that $S_0$ is a finite generating set for $H$ and
  that the exponential growth rates of $H$ with respect to $S_0$ and
  of $G$ with respect to $S = S_0 \cup \{ t \}$ satisfy
  \begin{equation}
    \label{eq:inequality}
    \lim_{n \rightarrow \infty} \sqrt[n]{\vert B_{H,S_0}(n)\vert } < \lim_{n
      \rightarrow \infty} \sqrt[n]{\vert B_{G,S}(n)\vert }. 
  \end{equation}
  Then $N$ has density $\dens_S(N)=0$ in $G$
  with respect to~$S$.
\end{theoremABC}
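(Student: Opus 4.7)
My plan is to prove the strict inequality
\[
\limsup_{n\to\infty}\sqrt[n]{|N\cap B_S(n)|}\;<\;\lambda_G\;:=\;\lim_{n\to\infty}\sqrt[n]{|B_S(n)|},
\]
from which $\dens_S(N)=0$ follows via the sub-multiplicativity estimate $|B_S(n)|\geq \lambda_G^n$. The key first step is to pin down the $S$-length of elements $g\in N$. Reading any word in $S^{\pm 1}$ from left to right, the $t^{\pm 1}$-letters trace a walk on $\mathbb{Z}$ that returns to $0$ (because $g\in N$), whereas the $S_0^{\pm 1}$-letters, grouped by the level at which they occur, give---using the commutativity hypothesis~\emph{(c)}---a decomposition $g=\prod_{j\in I}h_j^{t^j}$ with $I\subseteq\mathbb{Z}$ finite and $h_j\in H\setminus\{1\}$. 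Conversely, every such decomposition is realised by a word of length exactly $2R(I)+\sum_{j\in I}|h_j|_{S_0}$, where $R(I)$ is the minimum length of a walk on $\mathbb{Z}$ from $0$ back to $0$ visiting every point of $I$. Hence $|g|_S$ equals the minimum of this quantity over all admissible decompositions of $g$.

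Introduce the spherical growth series $\Phi(x):=\sum_{h\in H}x^{|h|_{S_0}}$ of $H$, of radius of convergence $1/\lambda_H$. The length description majorises
\[
\sum_{g\in N}x^{|g|_S}\;\leq\;\widetilde W(x)\;:=\;\sum_{I\subseteq\mathbb{Z}\text{ finite}}\ \sum_{(h_j)\in(H\setminus\{1\})^I}x^{2R(I)+\sum_j|h_j|_{S_0}}.
\]
Enumerating $I$ according to $a:=-\min(I\cup\{0\})$ and $b:=\max(I\cup\{0\})$ and applying the binomial-type identity $\sum_{T\subseteq[a,b]}(\Phi(x)-1)^{|T|}=\Phi(x)^{b-a+1}$ expresses $\widetilde W(x)$ as an explicit rational function of $x$ and $\Phi(x)$ whose denominator is a power of $1-x^2\Phi(x)$. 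Thus $|N\cap B_S(n)|\leq\sum_{m\leq n}[x^m]\widetilde W(x)$, and it suffices to verify that $\widetilde W$ is analytic at $x=1/\lambda_G$, i.e.\ that $\Phi(1/\lambda_G)<\infty$ and $(1/\lambda_G)^2\Phi(1/\lambda_G)<1$.

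The first condition amounts to $1/\lambda_G<1/\lambda_H$, which is precisely the standing hypothesis~\eqref{eq:inequality}. For the second, I would produce a lower bound on $|B_S(n)|$ by exhibiting the pairwise distinct elements $\bigl(\prod_{j=0}^{k}h_j^{t^j}\bigr)\cdot t^k$ ($k\geq 0$, $(h_0,\dots,h_k)\in H^{k+1}$), whose $S$-length is exactly $k+\sum_j|h_j|_{S_0}$ by the length formula. Their counting series in the length variable $y$ equals $\Phi(y)/\bigl((1-y)(1-y\Phi(y))\bigr)$, whose smallest positive singularity is the unique $y_0\in(0,1/\lambda_H)$ with $y_0\Phi(y_0)=1$; hence $\lambda_G\geq 1/y_0$, i.e.\ $y_0\geq 1/\lambda_G$. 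Since $H\neq\{1\}$ forces $\Phi(1)>1$ and hence $y_0<1$, strict monotonicity of $x\mapsto x^2\Phi(x)$ yields $(1/\lambda_G)^2\Phi(1/\lambda_G)\leq y_0^2\Phi(y_0)=y_0<1$, as required. The main anticipated technical subtlety is validating the length description beyond the pure wreath-product setting---namely when distinct powers of $t$ coincidentally normalise $H$ to the same subgroup, so that the decomposition $g=\prod h_j^{t^j}$ ceases to be unique---but since only the upper bound $\sum_g x^{|g|_S}\leq\widetilde W(x)$ is needed, this presents no real difficulty.
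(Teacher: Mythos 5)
Your approach is genuinely different from the paper's: you aim to prove the \emph{stronger} statement that $\limsup_n\sqrt[n]{|N\cap B_S(n)|}<\lambda_G$ by showing that the generating function $\widetilde W(x)$ is analytic beyond $1/\lambda_G$, whereas the paper merely shows density zero by splitting $N\cap B_S(n)$ into a small-support part (handled via a binomial count and Lemma~\ref{lem:stirling} using the growth inequality) and a large-support part (handled by translating by $t^{\pm q(n)}$ into $B_S(n-q(n))$ and invoking Proposition~\ref{pro:exists-q}). The paper's argument only ever needs \emph{upper} bounds on counts of elements of $N$, which is why it survives all the degeneracies permitted by hypotheses (a)--(c).

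Your argument, by contrast, has a genuine gap at exactly the point you dismiss as harmless. To get $(1/\lambda_G)^2\Phi(1/\lambda_G)<1$ you need a \emph{lower} bound on $\lambda_G$, and for that you assert that the elements $\bigl(\prod_{j=0}^k h_j^{t^j}\bigr)t^k$ with $(h_0,\dots,h_k)\in H^{k+1}$ are pairwise distinct and of length exactly $k+\sum_j|h_j|_{S_0}$. Both claims fail in the generality of Theorem~\ref{thm:main-2}. Take $G=BS(1,2)=\langle a,t\mid a^t=a^2\rangle$, $H=\langle a\rangle\cong\mathbb Z$, $N=\mathbb Z[1/2]$, $H^{t^j}=2^j\mathbb Z$: hypotheses (a)--(c) and condition \eqref{eq:inequality} hold (the $H^{t^j}$ are distinct, pairwise commute because $N$ is abelian, $\lambda_{S_0}(H)=1<\lambda_S(G)$). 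Yet the map $(m_0,\dots,m_k)\mapsto a^{m_0}ta^{m_1}\cdots ta^{m_k}=a^{\,m_0+m_1/2+\cdots+m_k/2^k}\,t^k$ is far from injective --- already $(m_0,m_1)=(2,0)$ and $(0,4)$ give the same element --- because the conjugates $H^{t^j}$ are nested rather than in direct sum, and moreover the lengths can be strictly shorter than $k+\sum|m_j|$ (e.g.\ $a^8=t^{-3}at^3$). The non-uniqueness of the decomposition $g=\prod h_j^{t^j}$ is therefore not a cosmetic issue absorbed by the upper bound; it destroys the only step where you deduce $y_0\ge 1/\lambda_G$, and without it $\widetilde W$ may well diverge at $1/\lambda_G$ (and I see no reason the strict exponential gap $\limsup_n\sqrt[n]{|N\cap B_S(n)|}<\lambda_G$ is even true in general; the theorem claims only density zero). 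To salvage your line of argument you would need a lower bound on $\lambda_G$ valid under the actual hypotheses, and I do not see one; the paper sidesteps the difficulty entirely by never needing a lower bound on $\lambda_G$, relying instead on Proposition~\ref{pro:exists-q}.
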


For finitely generated groups $G$ of sub-exponential word growth, the
density of a subgroup of infinite index, such as $N$ in
$G = N \rtimes \langle t \rangle$ with
$\langle t \rangle \cong C_\infty$, is always~$0$;
see~\cite{BuVe02}. Thus Theorem~\ref{thm:main-2} has the following
consequence.

\begin{corollary}
  Let $G = A \rtimes \langle t \rangle$ be a finitely generated group,
  where $A$ is abelian and $\langle t \rangle \cong C_\infty$.  Then
  $A$ has density $\dens_S(A) = 0$ in $G$, with respect to any finite
  generating set of $G$ that takes the form $S = S_0 \cup \{t\}$ with
  $S_0 \subseteq A$.
\end{corollary}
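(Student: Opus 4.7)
The plan is to dichotomise according to the growth type of~$G$ and to invoke Theorem~\ref{thm:main-2} in the harder case; the abelian hypothesis makes checking its hypotheses essentially automatic.

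First, if $G$ has sub-exponential word growth, the conclusion is immediate: since $G/A \cong \langle t \rangle$ is infinite, the subgroup $A$ has infinite index in~$G$, and the result from~\cite{BuVe02} recalled in the paragraph preceding the corollary gives $\dens_S(A)=0$ without further work.

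Now suppose $G$ has exponential word growth. My plan is to apply Theorem~\ref{thm:main-2} with $N := A$ and $H := \langle S_0 \rangle \leq A$. I would verify its three structural hypotheses as follows. Condition~(a) is part of the hypothesis of the corollary. For~(b), the subgroup $H$ is finitely generated by~$S_0$; since $S = S_0 \cup \{t\}$ generates~$G$ and $S_0 \subseteq A$, the normal closure of $H$ in $G$ coincides with~$A$, i.e.\ $A = \langle \bigcup_{i\in\mathbb{Z}} H^{t^i} \rangle$. For~(c), every conjugate $H^{t^i}$ lies inside the abelian group~$A$, so $[H^{t^i}, H^{t^j}] = 1$ for all $i,j \in \mathbb{Z}$, a fortiori whenever $H^{t^i} \ne H^{t^j}$.

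It remains to verify the growth inequality~\eqref{eq:inequality}. Here the abelian hypothesis is decisive: $H$ is a finitely generated abelian group, hence of polynomial word growth, so $\lim_{n\to\infty} \sqrt[n]{\vert B_{H,S_0}(n)\vert} = 1$; on the other hand $G$ has exponential word growth by our case assumption, whence $\lim_{n\to\infty} \sqrt[n]{\vert B_{G,S}(n)\vert} > 1$. The strict inequality in~\eqref{eq:inequality} is therefore automatic, and Theorem~\ref{thm:main-2} yields $\dens_S(A)=0$. I do not anticipate a genuine obstacle; the only bookkeeping point is separating the sub-exponential and exponential cases so that one can call upon either~\cite{BuVe02} or Theorem~\ref{thm:main-2}, respectively.
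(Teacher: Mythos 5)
Your proof is correct and follows precisely the route the paper has in mind: the sub-exponential case is dispatched by the Burillo--Ventura density result cited just before the corollary, and the exponential case follows from Theorem~\ref{thm:main-2} applied with $H = \langle S_0 \rangle$, where the growth inequality~\eqref{eq:inequality} is automatic because $H$, being a finitely generated abelian group, has $\lambda_{S_0}(H)=1<\lambda_S(G)$. Nothing to add.
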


Next we give a very simple concrete example to illustrate
that the technical condition~\eqref{eq:inequality} in
Theorem~\ref{thm:main-2} is not redundant: the
  situation truly differs from the one for groups of sub-exponential
  word growth. It is not difficult to craft more complex examples.

\begin{example}
  \label{ex:counterexample}
  Let $G = F \times \langle t\rangle$, where $F = \langle x,y \rangle$
  is the free group on two generators and
  $\langle t \rangle \cong C_\infty$.  Then $F$ has density
  $\dens_S(F) = 1/2 > 0$ in $G$ for the `obvious' generating
  set~$S=\{ x,y,t \}$.

  Indeed, for every $i \in \mathbb{Z}$ we have
  \[
    B_{G,S}(n) \cap F t^i =
    \begin{cases}
      B_{F,\{ x, y\}}(n - \vert i\vert ) t^i & \text{if $n \in \mathbb{N}$ with
        $n \ge \vert i\vert $,} \\
      \varnothing & \text{otherwise,}
    \end{cases}
  \]
  and hence, for all $n \in \mathbb{N}$,
  \[
    \vert B_{G,S}(n) \cap F\vert = \vert B_{F,\{x,y\}}(n)\vert
  \]
  and
  \[ \vert B_{G,S}(n)\vert  = \vert B_{F,\{x,y\}}(n)\vert  + 2 \sum\nolimits_{i=1}^{n}
    \vert B_{F,\{x,y\}}(n-i)\vert .
  \]
  This yields
  \begin{multline*}
    \frac{\vert B_{G,S}(n) \cap F\vert }{\vert B_{G,S}(n)\vert } =\frac{2\cdot 3^n
      -1}{2\cdot 3^n -1 + 2\sum_{i=1}^{n}\left(2\cdot 3^{n-i}
        -1\right)} \\ = \frac{2\cdot 3^n -1}{4 \cdot 3^n -2n-3}
    \to \frac{1}{2} \qquad
    \text{as $n \to \infty$.}
  \end{multline*}
  We remark that in this example $F$ and $G$ have the same exponential
  growth rates:
  \[
    \lim_{n \to \infty} \sqrt[n]{\vert B_{F,\{x,y\}}(n)\vert } = \lim_{n \to
      \infty} \sqrt[n]{\vert B_{G,S}(G)\vert } =3.
  \]
  Furthermore, the argument carries through without any obstacles with
  any finite generating set $S_0$ of $F$ in place of $\{x,y\}$.
\end{example}

Finally, we record an open question that suggests itself rather
naturally.

\begin{question} \label{que:alternative}
  Let $G$ be a finitely generated group such that $\dc_S(G) >0$ with
  respect to a finite generating set~$S$.  Does it follow that there
  exists an abelian subgroup $A \leq G$ such that $\dens_S(A) >0$?
\end{question}

For groups $G$ of sub-exponential word growth the answer is ``yes'',
as one can see by an easy argument from Theorem~\ref{thm:AMV}.  An
affirmative answer for groups of exponential word growth could be a
step towards establishing Conjecture~\ref{conjecture} or provide a
pathway to a possible alternative outcome.  At a heuristic level, an
affirmative answer to Question~\ref{que:alternative} would fit well
with the results in~\cite{Sh18} and~\cite{To20}. \\


\textbf{Notation.}
  Our notation is mostly standard. For a set $X$, we
    denote by $\mathcal{P}(X)$ its power set. For elements $g,h$ of a
  group $G$, we write $g^h=h^{-1}gh$ and $[g,h] = g^{-1} g^h$.  For a
  finite generating set $S$ of $G$, we denote by $l_S(g)$ the length
  of~$g$ with respect to~$S$, i.e., the distance between $g$ and $1$
  in the corresponding Cayley graph~$\mathcal{C}(G,S)$ so
    that
    \[
      B_S(n) = B_{G,S}(n) = \{ g \in G \mid l_S(g) \le n \} \qquad
      \text{for $n \in \mathbb{N}_0$.}
    \]

  Given $a,b \in \mathbb{R}$ and $T \subseteq \mathbb{R}$, we write
  $[a,b]_T = \{ x \in T \mid a \le x \le b \}$; for instance,
  $[-2,\sqrt{2}]_\mathbb{Z} = \{-2,-1,0,1\}$.  We repeatedly compare
  the limiting behaviour of real-valued functions defined on cofinite
  subsets of $\mathbb{N}_0$ which are eventually non-decreasing and
  take positive values.  For this purpose we employ the conventional
  Landau symbols; specifically we write, for functions $f,g$ of the
  described type,
  \begin{align*}
   \text{$f \in o(g)$, or  $g \in \omega(f)$,} %
    & \quad\text{if $\lim_{n \to \infty} \tfrac{f(n)}{g(n)} = 0$,
      equivalently $\lim_{n \to \infty} \tfrac{g(n)}{f(n)} = \infty$.}
  \end{align*}
  As customary, we use suggestive short notation such as, for
  instance, $f \in o(\log n)$ in place of $f \in o(g)$ for
  $g \colon \mathbb{N}_{\ge 2} \to \mathbb{R}$, $n \mapsto \log(n)$. \\

\textbf{Acknowledgement.}
  We thank two independent referees for detailed and valuable
  feedback.  Their comments triggered us to improve the exposition and
  to sort out a number of minor shortcomings.  In particular, this gave
  rise to Proposition~\ref{pro:exists-q}.


\section{Preliminaries}

\label{sec:preliminaries}

In this section, we collect preliminary and auxiliary results.
Furthermore, we briefly recall the wreath product construction and
establish basic notation.

\subsection{Groups of exponential word growth}
We concern ourselves with groups of exponential word
  growth.  These are finitely generated groups $G$ such that for any
  finite generating set $S$ of~$G$, the \emph{exponential growth
  rate}
\begin{equation}
  \label{eq:exponentail-rate}
  \lambda_S(G) = \lim_{n\rightarrow \infty } \sqrt[n]{\vert B_S(n)\vert } = \inf
  \big\{ \sqrt[n]{\vert B_S(n)\vert } \mid n \in \mathbb{N}_0 \big\}
\end{equation}
of $G$ with respect to~$S$ satisfies $\lambda_S(G) > 1$.  
  Since the word growth sequence $\vert B_S(n) \vert$,
  $n \in \mathbb{N}$, is submultiplicative, i.e.,
  \[
    \vert B_S(n + m) \vert \leq \vert B_S(n) \vert \vert B_S(m) \vert
    \qquad \text{for all } n, m \in \mathbb{N},
  \]
  the limit in \eqref{eq:exponentail-rate} exists and is equal to the
  infimum as stated, by Fekete's lemma~\cite[Corollary VI.57]{Ha03}.
We will use the following basic estimates:
\begin{align*}
  \lambda_S(G)^n \le \vert B_S(n)\vert  %
  & \quad \text{for all $n \in \mathbb{N}_0$}, \\
  \intertext{and, for each $\varepsilon \in \mathbb{R}_{>0}$,}
  \vert B_S(n)\vert  \le (\lambda_S(G) + \varepsilon)^n %
  & \quad \text{for all sufficiently large $n \in \mathbb{N}$.}
\end{align*}

In the proof of Theorem~\ref{thm:main-2}, the following
two auxiliary results are used.

\begin{lemma}
  \label{lem:stirling}
  For each $\alpha \in [0,1]_\mathbb{R}$, the sequences
  $\sqrt[n]{\binom{n+ \lceil \alpha n \rceil}{\lceil\alpha n\rceil}}$
  and $\sqrt[n]{\binom{n}{\lceil\alpha n\rceil}}$, $n \in \mathbb{N}$,
  converge, and furthermore
  \[
    \lim_{\alpha\rightarrow
      0^+}\left(\lim_{n\rightarrow\infty}\sqrt[n]{\binom{n+ \lceil
          \alpha n \rceil}{\lceil\alpha n\rceil}}\right) =
    \lim_{\alpha\rightarrow
      0^+}\left(\lim_{n\rightarrow\infty}\sqrt[n]{\binom{n}{\lceil\alpha
          n\rceil}}\right)=1.
  \]
  Consequently, if $f \colon \mathbb{N} \to \mathbb{R}_{>0}$ satisfies
  $f\in o(n)$, then the sequence
  $\binom{n +\lceil f(n)\rceil}{\lceil f(n)\rceil}$,
  $n \in \mathbb{N}$, grows sub-exponentially in~$n$, viz.\
  $\sqrt[n]{\binom{n+\lceil f(n)\rceil}{\lceil f(n)\rceil}} \to 1$ as
  $n \to \infty$.
\end{lemma}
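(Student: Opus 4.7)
The plan is to pin down the two $n$-th-root sequences by Stirling's formula for each fixed $\alpha$, read off the double-limit statement from continuity, and then derive the ``consequently'' clause by a simple monotonicity sandwich.

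Fix $\alpha\in[0,1]_{\mathbb{R}}$ and put $k_n=\lceil\alpha n\rceil$, so that $k_n=\alpha n+O(1)$. Plugging Stirling's formula $m!=\sqrt{2\pi m}\,(m/e)^m(1+O(1/m))$ into $\binom{n}{k_n}$ and $\binom{n+k_n}{k_n}$ and taking $\tfrac{1}{n}\log$, all polynomial prefactors and the bounded error $k_n-\alpha n\in[0,1)$ vanish, leaving
$$\tfrac{1}{n}\log\binom{n}{k_n}\longrightarrow -\alpha\log\alpha-(1-\alpha)\log(1-\alpha),$$
$$\tfrac{1}{n}\log\binom{n+k_n}{k_n}\longrightarrow (1+\alpha)\log(1+\alpha)-\alpha\log\alpha$$
(with the convention $0\log 0=0$). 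Exponentiating yields convergence of both $\sqrt[n]{\cdot}$ sequences, to continuous functions of $\alpha$ on $[0,1]$ that both take the value $1$ at $\alpha=0$; hence the two iterated limits as $\alpha\to 0^+$ are indeed~$1$.

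For the consequence, observe that $\binom{n+k}{k}$ is non-decreasing in $k\in\mathbb{N}_0$ for fixed $n$, because the successive ratio $(n+k+1)/(k+1)\ge 1$. Given $\varepsilon>0$, use the first part to choose $\alpha\in(0,1]$ with $\lim_{n\to\infty}\sqrt[n]{\binom{n+\lceil\alpha n\rceil}{\lceil\alpha n\rceil}}<1+\varepsilon$. Since $f\in o(n)$, for all sufficiently large $n$ we have $\lceil f(n)\rceil\le\lceil\alpha n\rceil$, and so
$$1\le\sqrt[n]{\binom{n+\lceil f(n)\rceil}{\lceil f(n)\rceil}}\le\sqrt[n]{\binom{n+\lceil\alpha n\rceil}{\lceil\alpha n\rceil}}.$$
Taking $\limsup_{n\to\infty}$ on the right and then $\varepsilon\to 0^+$ yields the sub-exponential growth claim.

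The only mildly delicate point is to ensure that the ceiling's $O(1)$ discrepancy $\lceil\alpha n\rceil-\alpha n$ does not leak into the exponential scale. Once one tracks it through Stirling it contributes only a factor of the form $(1+O(1/n))^n\cdot n^{O(1)}$, which disappears under $\tfrac{1}{n}\log$; so there is no real obstacle and the whole argument reduces to the standard entropy-function asymptotics for binomial coefficients.
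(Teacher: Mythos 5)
Your proposal is correct and takes essentially the same route as the paper: Stirling's approximation for fixed $\alpha$ gives the limit (your log-scale expression $(1+\alpha)\log(1+\alpha)-\alpha\log\alpha$ agrees with the paper's $\tfrac{(1+\alpha)^{1+\alpha}}{\alpha^\alpha}$), and both handle the ceiling's $O(1)$ discrepancy by noting it only contributes polynomial factors that vanish under the $n$-th root. The small additions on your side — the explicit entropy formula for the second sequence where the paper instead just squeezes it between $1$ and the first sequence, and the clean monotonicity sandwich $\binom{n+\lceil f(n)\rceil}{\lceil f(n)\rceil}\le\binom{n+\lceil\alpha n\rceil}{\lceil\alpha n\rceil}$ for large $n$ in the ``consequently'' clause — are welcome elaborations of steps the paper leaves terse, but they do not change the underlying argument.
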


\begin{proof}
  For each $\alpha \in [0,1]_\mathbb{R}$, Stirling's approximation for
  factorials yields
  \begin{align*}
    \binom{n + \lceil \alpha n \rceil}{\lceil\alpha n \rceil}
    &\sim\frac{\sqrt{2\pi (n + \lceil \alpha n \rceil)} \big( (n +
      \lceil \alpha n \rceil)/e \big)^{(n + \lceil \alpha n \rceil)}}
      {\sqrt{2\pi \lceil\alpha n\rceil}(\lceil\alpha n\rceil/e)^{\lceil\alpha n\rceil}
      \sqrt{2\pi n}(n/e)^{n}}\\
    &=\frac{\sqrt{n + \lceil \alpha n \rceil}}{\sqrt{2\pi n \lceil\alpha n \rceil}}
      \cdot \frac{\lceil n + \alpha n \rceil^{\lceil n + \alpha n
      \rceil}}{\lceil\alpha n\rceil^{\lceil \alpha n \rceil} n^n}, 
      \quad \text{as $n \to \infty$,}
  \end{align*}
  i.e., the ratio of the left-hand term to the right-hand term tends
  to~$1$ as $n$ tends to infinity. Moreover, for all  $n\in\mathbb{N}$,
  \[
    \frac{\lceil n + \alpha n \rceil^{\lceil n + \alpha n
        \rceil}}{\lceil\alpha n\rceil^{\lceil \alpha n \rceil} n^n}
    \ge \frac{(n + \alpha n)^{n + \alpha n}}{(\alpha
      n+1)^{\alpha n +1} n^n} = n^{-1} \left( \frac{(1+\alpha)^{1+\alpha}}{(\alpha
        +\sfrac{1}{n})^{(\alpha + \sfrac{1}{n})}}
    \right)^{n}
  \]
  and similarly
  \[
    \frac{\lceil n + \alpha n \rceil^{\lceil n + \alpha n
        \rceil}}{\lceil\alpha n\rceil^{\lceil \alpha n \rceil} n^n}
    \le \frac{(n + \alpha n + 1)^{n + \alpha n + 1}}{(\alpha
      n)^{\alpha n} n^n} = n \left( \frac{(1+\alpha
        +\sfrac{1}{n})^{(1+\alpha + \sfrac{1}{n})}}{\alpha^{\alpha} }
    \right)^{n}.
  \]
  This shows that
  \begin{equation*}
    \lim_{n\rightarrow\infty}\sqrt[n]{\binom{n + \lceil \alpha n \rceil}{\lceil \alpha n
        \rceil}} = \frac{(1+ \alpha)^{1 + \alpha}}{\alpha^\alpha}.
  \end{equation*}
  Since $\lim_{\alpha \to 0^+} \alpha^\alpha = 1$, we conclude that
  \begin{equation*}
    \lim_{\alpha\rightarrow
      0^+}\left(\lim_{n\rightarrow\infty}\sqrt[n]{\binom{n+ \lceil
          \alpha n \rceil}{\lceil\alpha n\rceil}}\right) 
    =1. 
  \end{equation*}
  
  A similar computation yields that the second sequence
  $\sqrt[n]{\binom{n}{\lceil\alpha n\rceil}}$, $n \in \mathbb{N}$,
  converges.  Again directly, or by virtue of
  \begin{equation*}
    1 \leq \sqrt[n]{\binom{n}{ \lceil \alpha n \rceil}} \leq \sqrt[n]{\binom{n + \lceil \alpha n \rceil}{\lceil \alpha n \rceil }},   
  \end{equation*}
  we conclude that also the second limit, for $\alpha \to 0^+$, is
  equal to~$1$.
\end{proof}

\begin{proposition}  \label{pro:exists-q}
  Let $G$ be a finitely generated group of exponential word growth,
  with finite generating set $S$.  Then there exists a non-decreasing
  unbounded function $q \colon \mathbb{N} \to \mathbb{R}_{\ge 0}$ such
  that $q \in o(n)$ and 
  \[
    \frac{\vert B_S(n)\vert }{\vert B_S(n-q(n))\vert } \to \infty
    \qquad \text{as $n \to \infty$}.
  \]
\end{proposition}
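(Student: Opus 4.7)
Write $a_n = \vert B_S(n)\vert$ and $\lambda = \lambda_S(G)$, so $\lambda > 1$, $a_n \geq \lambda^n$ and $a_n^{1/n} \to \lambda$. The plan is to define a candidate $q$ implicitly by the very ratio condition we want, verify its growth properties, and enforce monotonicity at the end by passing to a running maximum. Concretely, for each $n \in \mathbb{N}$ set
\[
q(n) = \inf\{k \in \mathbb{N} \mid a_n/a_{n-k} \geq \log n\},
\]
with the convention $\inf\varnothing = n$. Since $a_n \geq \lambda^n$ eventually exceeds $\log n$, one has $q(n) \leq n$ for all large $n$, and by construction $a_n/a_{n-q(n)} \geq \log n \to \infty$.

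To check $q(n) \to \infty$, suppose that $q(n_i) \leq K$ along some subsequence. Since $a$ is non-decreasing, $a_{n_i}/a_{n_i - K} \geq a_{n_i}/a_{n_i - q(n_i)} \geq \log n_i$, while submultiplicativity gives $a_{n_i}/a_{n_i - K} \leq a_K$, contradicting $\log n_i \to \infty$. To check $q \in o(n)$, suppose instead that there exist $\alpha \in (0,1)$ and a subsequence $n_i \to \infty$ with $q(n_i) \geq \alpha n_i$. Since $q(n_i)$ is an integer we have $q(n_i) \geq \lceil \alpha n_i \rceil$, so minimality applied to $k_i = \lceil \alpha n_i \rceil - 1$ gives $a_{n_i} < a_{m_i} \log n_i$, where $m_i = n_i - k_i$ satisfies $m_i \to \infty$ and $m_i/n_i \to 1-\alpha$. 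Taking $n_i$-th roots and using $(\log n_i)^{1/n_i} \to 1$ together with
\[
a_{m_i}^{1/n_i} = \bigl(a_{m_i}^{1/m_i}\bigr)^{m_i/n_i} \to \lambda^{1-\alpha},
\]
one obtains $\lambda = \lim_i a_{n_i}^{1/n_i} \leq \lambda^{1-\alpha}$, contradicting $\lambda > 1$.

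Finally, replace $q$ by the running maximum $\tilde q(n) = \max_{1 \leq m \leq n} q(m)$: this is non-decreasing, satisfies $\tilde q \geq q \to \infty$, and lies in $o(n)$ by a routine argument (given $\varepsilon > 0$, pick $N$ with $q(m) < \varepsilon m$ for $m \geq N$; then the running maximum over $1 \leq m \leq n$ is at most $\max\{\max_{m \leq N} q(m),\, \varepsilon n\}$, which is $\leq \varepsilon n$ for $n$ sufficiently large). Moreover $\tilde q(n) \geq q(n)$ forces $a_{n-\tilde q(n)} \leq a_{n-q(n)}$, and hence $a_n/a_{n-\tilde q(n)} \geq a_n/a_{n-q(n)} \geq \log n \to \infty$, completing the argument.

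The main obstacle is the sublinearity step, which requires combining the minimality of $q(n_i)$ with the submultiplicative upper bound and with the fact that $a_m^{1/m} \to \lambda$ along \emph{every} subsequence $m \to \infty$; the mild subexponential factor $(\log n_i)^{1/n_i} \to 1$ is precisely the reason one can afford $\log n$ as the threshold in the definition of $q$, while still deriving the strict inequality $\lambda \leq \lambda^{1-\alpha}$ needed for the contradiction. Everything else is straightforward bookkeeping.
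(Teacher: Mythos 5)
Your proof is correct, and it takes a genuinely different and in fact more direct route than the paper's. You both rely on the same two basic facts — submultiplicativity of $n \mapsto |B_S(n)|$ and the Fekete-limit $|B_S(n)|^{1/n} \to \lambda > 1$ — but you deploy them differently. You define $q(n)$ implicitly as the least $k$ with $|B_S(n)|/|B_S(n-k)| \geq \log n$, so the ratio condition is built in, and then verify the two growth requirements: unboundedness falls out of submultiplicativity ($|B_S(n)|/|B_S(n-K)| \leq |B_S(K)|$ is bounded for fixed $K$, yet must beat $\log n$), and sublinearity falls out of taking $n$-th roots in the minimality inequality $|B_S(n)| < |B_S(m_i)| \log n_i$ with $m_i/n_i \to 1-\alpha$, which forces the contradiction $\lambda \leq \lambda^{1-\alpha}$. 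The paper instead writes $|B_S(n)| = \lambda^{\sum_{i\le n} b_i}$, proves the intermediate claim that $\sum_{i = n - \lfloor n/m \rfloor + 1}^{n} b_i \to \infty$ for every fixed $m$ by a quantitative block-counting argument (partitioning $[1,n]$ into blocks of size $N$ and estimating the fraction of blocks where the partial sum is at least $\varepsilon N$), and then builds $q$ by a diagonal argument over $m$. Your approach is shorter and sidesteps the block-counting entirely; what the paper's route buys is an explicit rate statement (the intermediate claim holds for the specific family $\lfloor n/m \rfloor$, which is of order $\Theta(n)$ rather than $o(n)$), which might be reusable. One small bookkeeping point in your write-up: the infimum should be taken over $k \in [1,n]_{\mathbb{Z}}$ so that $B_S(n-k)$ is defined, but this is precisely what your convention $\inf \varnothing = n$ and the subsequent running-maximum step handle, so there is no real gap.
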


\begin{proof}
  We put $\lambda = \lambda_S(G) > 1$ and write
  $\vert B_S(n) \vert = \lambda^{\sum_{i=1}^n b_i}$, with
  $b_i \in \mathbb{R}_{\ge 0}$ for $i \in \mathbb{N}$, so that the
  sequence $\sum_{i=1}^n b_i$, $n \in \mathbb{N}$, is subadditive and
   \[
     \lim_{n \to \infty} \frac{1}{n} \sum\nolimits_{i=1}^n b_i = 1.
   \]
   In this notation, we seek a non-decreasing unbounded function
   $q \colon \mathbb{N} \to \mathbb{R}_{\ge 0}$ such that,
   simultaneously,
  \begin{equation} \label{eq:requirements-q}
    q(n)/n \to 0 \quad \text{and} \quad \sum\nolimits_{i = n - \lfloor
      q(n) \rfloor +1}^n b_i \to \infty \qquad \text{as
      $n \to \infty$}.
  \end{equation}
  
  We show below that for every $m \in \mathbb{N}$,
  \begin{equation} \label{eq:reduction-to-1/m}
    \sum\nolimits_{i=n - \lfloor n/m \rfloor +1}^n b_i \to \infty
    \qquad \text{as $n \to \infty$}.
  \end{equation}
  From this we see that there is an increasing sequence of
  non-positive integers $c(m)$, $m \in \mathbb{N}$, such that, for
  each $m$,
  \[
    c(m) \ge m^2 \quad \text{and} \quad \forall n \in \mathbb{N}_{\ge
      c(m)}: \sum\nolimits_{i=n - \lfloor n/m \rfloor +1}^n b_i \ge m.
  \]
  Setting $q_1(n) = \lfloor n/m \rfloor$ for $n \in \mathbb{N}$ with
  $c(m) \le n < c(m+1)$ and
  \[
    q(n) = \max \{ q_1(k) \mid k \in [1,n]_\mathbb{Z} \},
  \]
  we arrive at a function $q \colon \mathbb{N} \to \mathbb{R}_{\ge 1}$
  meeting the requirements~\eqref{eq:requirements-q}.

  \smallskip
  
  It remains to establish~\eqref{eq:reduction-to-1/m}.  Let
  $m \in \mathbb{N}$ and put
  $\varepsilon = \varepsilon_m = (6m)^{-1} \in \mathbb{R}_{>0}$.  We
  choose $N = N_\varepsilon \in \mathbb{N}$ minimal subject to
  \[
    1 -\varepsilon \le \frac{1}{n} \sum\nolimits_{i=1}^n b_i \le 1+\varepsilon
    \qquad \text{for all $n \in \mathbb{N}_{\ge N}$.}
  \]
  In the following we deal repeatedly with sums of the form
  \[
    \beta(k) = \sum\nolimits_{i=kN+1}^{kN+N} b_i,
  \]
  for $k \in \mathbb{N}$, and using subadditivity, we obtain
  \[
    \beta(k) \le \beta(0) \le (1+\varepsilon) N \qquad \text{for all
      $k \in \mathbb{N}$.}
  \]

  We consider $n \in \mathbb{N}$ with
  $n \ge (1+\varepsilon) \varepsilon^{-1} N \ge N$ and write
  $n = l N + r$ with $l = l_n \in \mathbb{N}$ and
  $r = r_n \in [0,N-1]_\mathbb{Z}$.  Furthermore, we set
  \[
    t = t_n = \frac{\big\vert \big\{ k \in [0,l-1]_\mathbb{Z}
      \;\big\vert\; \beta(k) > \varepsilon N \big\} \big\vert}{l} \in
    [0,1]_\mathbb{R}.
  \]
  From our set-up, we deduce that
  \begin{multline*}
    1 - \varepsilon \le \frac{1}{n} \sum\nolimits_{i=1}^n b_i \le
    \frac{1}{lN} \bigg( \Big( \sum\nolimits_{k=0}^{l-1} \beta(k)
    \Big) + \beta(l) \bigg) \\
    \le \big( t (1+\varepsilon) + (1-t) \varepsilon \big) +
    \frac{1+\varepsilon}{l} \le t + 2\varepsilon,
  \end{multline*}
  hence $t \ge 1-3\varepsilon$ and consequently
  \begin{multline*}
    \big\vert \big\{ k \in [0,l-1]_\mathbb{Z} \mid \beta(k) >
    \varepsilon N \big\} \cap \big\{ k \in [0,l-1]_\mathbb{Z} \mid
    \lceil (1-6\varepsilon)l \rceil + 1 \le k \big\} \big\vert \\ \ge
    t l + \big( l - \lceil (1-6\varepsilon)l \rceil -1 \big) - l \ge
    \big( 1-3\varepsilon - (1-6\varepsilon) \big) l - 2 = 3\varepsilon
    l - 2.
  \end{multline*}
  Since
  \[
    n- \lfloor n/m \rfloor = \lceil (1- 6 \varepsilon)n \rceil \le
    \lceil (1- 6 \varepsilon) (l+1) \rceil N \le (\lceil (1- 6
    \varepsilon) l \rceil + 1 )N,
  \]
  this gives
  \begin{equation*}
    \sum\nolimits_{i=n- \lfloor n/m \rfloor +1}^n b_i 
    \ge \sum\nolimits_{k= \lceil
      (1-6\varepsilon)l \rceil +1}^{l-1} \beta(k)
    \ge
    (3 \varepsilon l - 2)  \varepsilon N, 
  \end{equation*}
  which tends to infinity as $l \to \infty$.  This
  proves~\eqref{eq:reduction-to-1/m}.
\end{proof}

In~\cite[Lemma 2.2]{Pi00} Pittet seems to claim that
\begin{equation*}
  \liminf_{n \to \infty} \frac{\vert B_S(n)\vert }{\vert B_S(n-1)\vert } > 1,
\end{equation*}
from which Proposition~\ref{pro:exists-q} could be derived much more
easily.  However, we found the explanations in \cite{Pi00} not fully
conclusive and thus opted to work out an independent argument.
Naturally, it would be interesting to establish a more effective
version of Proposition~\ref{pro:exists-q}, if possible.  


\subsection{Wreath products}
\label{sec:wreath-products}
We recall that a group $G = H \wr K$ is the restricted standard
\emph{wreath product} of two subgroups $H$ and~$K$, if it decomposes
as a semidirect product $G = N \rtimes K$, where the
normal closure of $H$ is the direct sum
$N = \bigoplus_{k \in K} H^k$ of the various conjugates
of $H$ by elements of~$K$; the groups $N$ and $K$ are
referred to as the \emph{base group} and the \emph{top group} of the
wreath product~$G$, respectively. Since we do not consider complete
standard wreath products or more general types of permutational wreath
products, we shall drop the terms ``restricted'' and ``standard'' from
now on.

Throughout the rest of this section, let
\begin{equation} \label{eq:standing-assumption-G} G = H \wr \langle t
  \rangle = N \rtimes \langle t \rangle \qquad \text{with
    base group} \qquad N = \bigoplus\nolimits_{i \in
    \mathbb{Z}} H^{t^i}
\end{equation}
be the wreath product of a finitely generated subgroup~$H$ and an
infinite cyclic subgroup $\langle t \rangle \cong C_\infty$.  Every
element $g \in G$ can be written uniquely in the form
\[
  g = \widetilde{g} \, t^{\rho(g)},
\]
where $\rho(g) \in \mathbb{Z}$ and
$\widetilde{g} = \prod\nolimits_{i \in \mathbb{Z}} (g_{\vert i})^{\,
  t^i} \in N$ with `coordinates' $g_{\vert i} \in H$. The
support of the product decomposition of $\widetilde{g}$ is finite and
we write
\[
  \supp(g) = \{ i \in \mathbb{Z} \mid g_{\vert i} \neq 1 \}.
\]

Furthermore, it is convenient to fix a finite symmetric generating set
$S$ of~$G$; thus $G = \langle S \rangle$, and $g \in S$
  implies $g^{-1} \in S$.  We put $d = \vert S\vert $ and fix an
ordering of the elements of~$S$:
\begin{equation}\label{eq:standing-assumption-S}
  S = \{ s_1, \ldots, s_d \}, \qquad \text{with
    $s_j = \widetilde{s_{j}} \, t^{\rho(s_j)}$ for $j \in [1,d]_\mathbb{Z}$,}
\end{equation}
where $\widetilde{s_1}, \ldots, \widetilde{s_d} \in N$.
We write
$r_S = \max \big\{ \rho(s_j) \mid j \in [1,d]_\mathbb{Z} \big\}
 \in \mathbb{N}$.

\begin{definition}
  \label{def:itinerary}
  An \emph{$S$-expression} of an element $g \in G$ is (induced by) a
  word $W = \prod_{k=1}^l X_{\iota(k)}$ in the free semigroup
  $\langle X_1, \ldots, X_d \rangle$ such that
  \begin{equation}
    \label{eq:writing}
    g = W(s_1, \ldots, s_d) = \prod\nolimits_{k=1}^l s_{\iota(k)} ;
  \end{equation}
  here $W$ determines and is determined by the function
  $\iota = \iota_W \colon [1,l]_\mathbb{Z} \to [1,d]_\mathbb{Z}$.  In
  this situation the standard process of collecting powers of
    $t$ to the right yields
  \begin{equation} \label{equ:recover-g-from-It} g = \widetilde{g} \,
    t^{-\sigma(l)} \qquad \text{with} \quad \widetilde{g} =
    \prod\nolimits_{k=1}^l \widetilde{s_{\iota(k)}}^{\,
      t^{\sigma(k-1)}},
  \end{equation}
  where $\sigma = \sigma_{S,W}$ is short for the negative\footnote{At
    this stage the sign change is a price we pay for not introducing notation
    for left-conjugation; Example~\ref{exa:itinery-etc} illustrates
    that $\sigma$ plays a convenient role in the concept of itinerary.}
  cumulative exponent function
  \[
    \sigma_{S,W} \colon [0,l]_\mathbb{Z} \to \mathbb{Z}, \quad k
    \mapsto - \sum\nolimits_{j=1}^k \rho \big( s_{\iota(j)} \big).
  \]

  We define the \emph{itinerary} of $g$ associated to the
  $S$-expression~\eqref{eq:writing} as the pair
  \[
    \It(S,W) = (\iota_W,\sigma_{S,W}),
  \]
  and we say that $\It(S,W)$ has length~$l$, viz.\ the length of the
  word~$W$.  For the purpose of concrete calculations it
    is helpful to depict the functions $\iota_{W}$ and $\sigma_{S, W}$
    as finite sequences.  The term `itinerary' refers
  to~\eqref{equ:recover-g-from-It}, which indicates how $g$ can be
  built stepwise from the sequences $\iota_W$ and $\sigma_{S,W}$; see
  Example~\ref{exa:itinery-etc} below.  In particular, $g$
    is uniquely determined by the itinerary $\It(S,W) = (\iota,\sigma)$
    and, accordingly, we refer to $g$ as the element corresponding to
    that itinerary.  But unless $G$ is trivial and $S$ is empty, the
    element $g$ has, of course, infinitely many $S$-expressions which
    in turn give rise to infinitely many distinct itineraries of one
    and the same element.
  
   For discussing features of the exponent function
    $\sigma_{S,W}$, we call
  \begin{equation*}
    \maxit \!\big( \It(S,W) \big) = \max (\sigma_{S,W}) \qquad \text{and} \qquad
    \minit \!\big( \It(S,W) \big) = \min (\sigma_{S,W})
  \end{equation*}
  the \emph{maximal} and \emph{minimal itinerary points}
  of~$\It(S,W)$.  Later we fix a representative function
  $\mathcal{W} \colon G \to \langle X_1, \ldots, X_d \rangle$,
  $g \mapsto W_g$ which yields for each element of $G$ an
  $S$-expression of shortest possible length.  In that situation we
  suppress the reference to $S$ and refer to
  \[
    \It_\mathcal{W}(g) = \It(S,W_g), \; \maxit_\mathcal{W}(g) =
    \maxit \!\big( \It_\mathcal{W}(g) \big), \; \minit_\mathcal{W}(g) =
    \minit \!\big( \It_\mathcal{W}(g) \big)
  \]
  as the $\mathcal{W}$-\emph{itinerary}, the \emph{maximal
    $\mathcal{W}$-itinerary point} and the \emph{minimal
    $\mathcal{W}$-itinerary point} of any given element~$g$.
\end{definition}

To illustrate the terminology we discuss a concrete
  example.

\begin{example}
  \label{exa:itinery-etc}
  A typical example of the wreath products that we consider is the
  lamplighter group
  \[
    G = \langle a, t \mid a^{2} = 1, [a, a^{t^i}]=1 \text{ for
      $i \in \mathbb{Z}$} \rangle = \bigoplus\nolimits_{i \in \mathbb{Z}}
    \langle a_i \rangle \rtimes \langle t \rangle \cong C_2\wr
    C_\infty,
  \]
  where $a_i = a^{\, t^i}$ for each $i \in \mathbb{Z}$.  We consider
  the finite symmetric generating set
  \[
    S = \{ s_1, \ldots, s_5 \}
  \]
  with
  \[
    s_1 = a_4 t^{-3}, \, s_2 = t^{-2}, \, s_3 = s_1^{\, -1} = a_1
    t^3, \, s_4 = s_2^{\, -1} = t^2, \, s_5  = a_0 = s_5^{\, -1}.
  \]

  Let $g =\widetilde{g} \, t^{3}$ be such that $g_{\vert i} = a$ for
  $i \in \{-1,1,2,6\}$ and $g_{\vert i} = 1$ otherwise.  Then we have
  $\rho(g) = 3$, $\supp(g) = \{-1,1,2,6\}$, and
  \begin{equation}
    \label{eq:writing-example}
    g = t^{-2} \cdot a_0 \cdot a_4t^{-3} \cdot \big( t^2
      \big)^{\, 2} \cdot a_0
    \cdot t^{2} \cdot a_0 \cdot t^{2} = s_2 \, s_5 \, s_1
      \, s_4^{\, 2} \, s_5 \, s_4 \, s_5 \, s_4
  \end{equation}
  is an $S$-expression for $g$ of length~$9$, based on
  $W = X_2 X_5 X_1 X_4^{\, 2} X_5 X_4 X_5 X_4$.  The
  itinerary $I = \It(S,W)$ associated to this $S$-expression for $g$
  is
  \begin{equation} \label{eq:itinery-example} I = (\iota, \sigma) =
    \big( (2, 5, 1, 4, 4, 5, 4, 5, 4), (0,2,2,5,3,1,1,-1,-1,-3) \big),
  \end{equation}
  where we have written the maps $\iota$ and $\sigma$ in sequence
  notation.  Furthermore, we see that $\maxit(I) = 5$ and $\minit(I) =
  -3$.  Figure~\ref{fig:itinerary} gives a pictorial description of
  part of the information contained in~$I$.
  \begin{figure}[H]
    \centering
    \begin{tikzpicture}[x=0.5pt,y=0.5pt,yscale=-1,xscale=1]
      \draw (60,100) -- (650,100) ;
      \draw (80,95.5) -- (80,105.5) ;
      \draw (130,95) -- (130,105) ;
      \draw (180,95) -- (180,105) ;
      \draw (280,95) -- (280,105) ;
      \draw (430,95) -- (430,105) ;
      \draw (480,95) -- (480,105) ;
      \draw (530,95) -- (530,105) ;
      \draw (630,95) -- (630,105) ;
    
      \draw [draw opacity=0] (280,100) .. controls (280,100) and
      (280,100) .. (280,100) .. controls (280,72.39) and (302.39,50)
      .. (330,50) .. controls (357.61,50) and (380,72.39) .. (380,100)
      -- (330,100) -- cycle ; \draw (280,100) .. controls (280,100)
      and (280,100) .. (280,100) .. controls (280,72.39) and
      (302.39,50) .. (330,50) .. controls (357.61,50) and (380,72.39)
      .. (380,100) ;
      \draw [draw opacity=0] (380,100) .. controls (380,100) and
      (380,100) .. (380,100) .. controls (380,72.39) and (413.58,50)
      .. (455,50) .. controls (496.42,50) and (530,72.39) .. (530,100)
      -- (455,100) -- cycle ; \draw (380,100) .. controls (380,100)
      and (380,100) .. (380,100) .. controls (380,72.39) and
      (413.58,50) .. (455,50) .. controls (496.42,50) and (530,72.39)
      .. (530,100) ;
      \draw [draw opacity=0] (530,100) .. controls (530,100) and
      (530,100) .. (530,100) .. controls (530,127.61) and (507.61,150)
      .. (480,150) .. controls (452.39,150) and (430,127.61)
      .. (430,100) -- (480,100) -- cycle ; \draw (530,100) .. controls
      (530,100) and (530,100) .. (530,100) .. controls (530,127.61)
      and (507.61,150) .. (480,150) .. controls (452.39,150) and
      (430,127.61) .. (430,100) ;
      \draw [draw opacity=0] (430,100) .. controls (430,100) and
      (430,100) .. (430,100) .. controls (430,127.61) and (407.61,150)
      .. (380,150) .. controls (352.39,150) and (330,127.61)
      .. (330,100) -- (380,100) -- cycle ; \draw (430,100) .. controls
      (430,100) and (430,100) .. (430,100) .. controls (430,127.61)
      and (407.61,150) .. (380,150) .. controls (352.39,150) and
      (330,127.61) .. (330,100) ;
      \draw [draw opacity=0] (330,100) .. controls (330,100) and
      (330,100) .. (330,100) .. controls (330,127.61) and (307.61,150)
      .. (280,150) .. controls (252.39,150) and (230,127.61)
      .. (230,100) -- (280,100) -- cycle ; \draw (330,100) .. controls
      (330,100) and (330,100) .. (330,100) .. controls (330,127.61)
      and (307.61,150) .. (280,150) .. controls (252.39,150) and
      (230,127.61) .. (230,100) ; \draw (325,47.5) -- (332,50) --
      (325,52.5) ; \draw (450,47.5) -- (457,50) -- (450,52.5) ; \draw
      (485,152.5) -- (477.96,150) -- (484.91,147.5) ; \draw
      (385,152.5) -- (377.96,150) -- (384.91,147.5) ; \draw
      (285,152.5) -- (277.96,150) -- (284.91,147.5) ;

      \draw [fill={rgb, 255:red, 0; green, 0; blue, 0 } ,fill
      opacity=1 ] (226.5,100) .. controls (226.5,98.07) and
      (228.07,96.5) .. (230,96.5) .. controls (231.93,96.5) and
      (233.5,98.07) .. (233.5,100) .. controls (233.5,101.93) and
      (231.93,103.5) .. (230,103.5) .. controls (228.07,103.5) and
      (226.5,101.93) .. (226.5,100) -- cycle ;
      \draw [fill={rgb, 255:red, 0; green, 0; blue, 0 } ,fill
      opacity=1 ] (326.5,100) .. controls (326.5,98.07) and
      (328.07,96.5) .. (330,96.5) .. controls (331.93,96.5) and
      (333.5,98.07) .. (333.5,100) .. controls (333.5,101.93) and
      (331.93,103.5) .. (330,103.5) .. controls (328.07,103.5) and
      (326.5,101.93) .. (326.5,100) -- cycle ;
      \draw [fill={rgb, 255:red, 0; green, 0; blue, 0 } ,fill
      opacity=1 ] (376.5,100) .. controls (376.5,98.07) and
      (378.07,96.5) .. (380,96.5) .. controls (381.93,96.5) and
      (383.5,98.07) .. (383.5,100) .. controls (383.5,101.93) and
      (381.93,103.5) .. (380,103.5) .. controls (378.07,103.5) and
      (376.5,101.93) .. (376.5,100) -- cycle ;

      \draw [fill={rgb, 255:red, 0; green, 0; blue, 0 } ,fill
      opacity=1 ] (576.5,100) .. controls (576.5,98.07) and
      (578.07,96.5) .. (580,96.5) .. controls (581.93,96.5) and
      (583.5,98.07) .. (583.5,100) .. controls (583.5,101.93) and
      (581.93,103.5) .. (580,103.5) .. controls (578.07,103.5) and
      (576.5,101.93) .. (576.5,100) -- cycle ;

      \draw [draw opacity=0] (230,100) .. controls (230,100) and
      (230,100) .. (230,100) .. controls (230,127.61) and (207.61,150)
      .. (180,150) .. controls (152.39,150) and (130,127.61)
      .. (130,100) -- (180,100) -- cycle ; \draw (230,100) .. controls
      (230,100) and (230,100) .. (230,100) .. controls (230,127.61)
      and (207.61,150) .. (180,150) .. controls (152.39,150) and
      (130,127.61) .. (130,100) ; \draw (185,152.5) -- (177.96,150) --
      (184.91,147.5) ;
      \draw (229,99) -- (219,89) ;
      \draw [draw opacity=0] (219.39,89.41) .. controls (216.68,86.69)
      and (215,82.94) .. (215,78.8) .. controls (215,70.52) and
      (221.72,63.8) .. (230,63.8) .. controls (238.28,63.8) and
      (245,70.52) .. (245,78.8) .. controls (245,82.94) and
      (243.32,86.69) .. (240.61,89.41) -- (230,78.8) -- cycle ; \draw
      (219.39,89.41) .. controls (216.68,86.69) and (215,82.94)
      .. (215,78.8) .. controls (215,70.52) and (221.72,63.8)
      .. (230,63.8) .. controls (238.28,63.8) and (245,70.52)
      .. (245,78.8) .. controls (245,82.94) and (243.32,86.69)
      .. (240.61,89.41) ;
      \draw (231,99) -- (241,89) ;
      \draw (329,98.7) -- (319,88.7) ;
      \draw [draw opacity=0] (319.39,89.11) .. controls (316.68,86.39)
      and (315,82.64) .. (315,78.5) .. controls (315,70.22) and
      (321.72,63.5) .. (330,63.5) .. controls (338.28,63.5) and
      (345,70.22) .. (345,78.5) .. controls (345,82.64) and
      (343.32,86.39) .. (340.61,89.11) -- (330,78.5) -- cycle ; \draw
      (319.39,89.11) .. controls (316.68,86.39) and (315,82.64)
      .. (315,78.5) .. controls (315,70.22) and (321.72,63.5)
      .. (330,63.5) .. controls (338.28,63.5) and (345,70.22)
      .. (345,78.5) .. controls (345,82.64) and (343.32,86.39)
      .. (340.61,89.11) ;
      \draw (331,98.7) -- (341,88.7) ;
      \draw (380.94,100) -- (391.05,109.89) ;
      \draw [draw opacity=0] (390.65,109.48) .. controls
      (393.4,112.17) and (395.12,115.9) .. (395.17,120.04) .. controls
      (395.26,128.32) and (388.62,135.12) .. (380.34,135.21)
      .. controls (372.06,135.3) and (365.26,128.66)
      .. (365.17,120.38) .. controls (365.12,116.24) and
      (366.76,112.47) .. (369.44,109.72) -- (380.17,120.21) -- cycle ;
      \draw (390.65,109.48) .. controls (393.4,112.17) and
      (395.12,115.9) .. (395.17,120.04) .. controls (395.26,128.32)
      and (388.62,135.12) .. (380.34,135.21) .. controls
      (372.06,135.3) and (365.26,128.66) .. (365.17,120.38)
      .. controls (365.12,116.24) and (366.76,112.47)
      .. (369.44,109.72) ;
      \draw (378.94,100.02) -- (369.05,110.14) ; \draw (374.89,132.4)
      -- (381.89,134.9) -- (374.89,137.4) ; \draw (335,66.3) --
      (327.96,63.8) -- (334.91,61.3) ; \draw (234.6,66.3) --
      (227.56,63.8) -- (234.51,61.3) ;

      \draw (274,106) node [anchor=north west][inner sep=0.75pt]
      [align=left] {{\scriptsize $0$}};
      \draw (200,106) node [anchor=north west][inner sep=0.75pt]
      [align=left] {{\scriptsize $-1$}};
      \draw (160,106) node [anchor=north west][inner sep=0.75pt]
      [align=left] {{\scriptsize $-2$}};
      \draw (105,106) node [anchor=north west][inner sep=0.75pt]
      [align=left] {{\scriptsize $-3$}};
      \draw (60,106) node [anchor=north west][inner sep=0.75pt]
      [align=left] {{\scriptsize $-4$}};
      \draw (118,142) node [anchor=north west][inner sep=0.75pt]
      [align=left][rotate=90] {{\scriptsize $=$}};
      \draw (98,146) node [anchor=north west][inner sep=0.75pt]
      [align=left] {{\scriptsize $-\rho(g)$}};
      \draw (314,106) node [anchor=north west][inner sep=0.75pt]
      [align=left] {{\scriptsize $1$}};
      \draw (374,106) node [anchor=north west][inner sep=0.75pt]
      [align=left] {{\scriptsize $2$}};
      \draw (414,106) node [anchor=north west][inner sep=0.75pt]
      [align=left] {{\scriptsize $3$}};
      \draw (475,106) node [anchor=north west][inner sep=0.75pt]
      [align=left] {{\scriptsize $4$}};
      \draw (529.07,106) node [anchor=north west][inner sep=0.75pt]
      [align=left] {{\scriptsize $5$}};
      \draw (575,106) node [anchor=north west][inner sep=0.75pt]
      [align=left] {{\scriptsize $6$}};
      \draw (624.5,106) node [anchor=north west][inner sep=0.75pt]
      [align=left] {{\scriptsize $7$}};
    \end{tikzpicture}

    \caption{An illustration of the itinerary of $g$ in
      \eqref{eq:itinery-example} associated to the
      \mbox{$S$-expression} in
      \eqref{eq:writing-example}; the support of
        $\widetilde{g}$ is also indicated}.
    \label{fig:itinerary}
  \end{figure}

An alternative $S$-expression for the same element $g$
    is
    \begin{align}
      \label{eq:writing-example-2}
      g  & =   a_4 t^{-3} \cdot \big( t^2 \big)^2 \cdot  a_0 \cdot a_1 t^3
           \cdot \big( t^{-2} \big)^3  \cdot  a_0 \cdot
           t^{-2} \cdot a_0 \cdot t^{-2} \cdot a_0  \cdot \big( t^2
           \big)^3 \cdot a_0 \cdot a_1 t^{3} \nonumber \\ 
         & = s_1 \, s_4^{\, 2} \, s_5 \, s_3 \, s_2^{\, 3} \, s_5 \,
           s_2 \, s_5 \,  s_2  \, s_5 \, s_4^{\, 3}  \, s_5 \, s_3.
    \end{align}
    It has length $18$ and is based on the semigroup word
    \[
      W' = X_1 \, X_4^{\, 2} \, X_5 \, X_3 \, X_2^{\, 3} \, X_5 \, X_2
      \, X_5 \, X_2 \, X_5 \, X_4^{\, 3} \, X_5 \, X_3.
    \]
    In this case, the itinerary associated to the $S$-expression
    \eqref{eq:writing-example-2} is
    \begin{multline*}
      I' = (\iota', \sigma') = \big( (1, 4, 4, 5 , 3 , 2 , 2 , 2 , 5
      , 2 , 5 , 2 , 5 , 4, 4 , 4, 5, 3),\\
      (0, 3, 1, -1, -1, -4, -2, 0, 2, 2, 4, 4, 6, 6, 4, 2, 0, 0, -3) \big),
    \end{multline*}
    and we observe that $\maxit(I') = 6$ and $\minit(I') = -4.$
\end{example}

There is a natural notion of a product of two
  itineraries, and it has the expected properties.  We collect the
  precise details in a lemma.
   
\begin{lemma-defi}
  \label{lem:itindecomposition}
  In the general set-up described above, suppose that
  $I_1 = (\iota_1, \sigma_1)$ and $I_2 = (\iota_2, \sigma_2)$ are
  itineraries, of lengths $l_1$ and $l_2$, associated to
  $S$-expressions $W_1, W_2$ for elements $g_1, g_2 \in G$.  Then
  $W = W_1W_2$ is an $S$-expression for $g = g_1g_2$; the associated
  itinerary
  \[
    I = \It(S,W) = (\iota,\sigma)
  \]
  has length $l = l_1 + l_2$ and its components are given by
  \begin{align*}
    \iota(k) & =
    \begin{cases}
      \iota_1(k) & \text{if $k \in [1,l_1]_\mathbb{Z}$,} \\
      \iota_2(k-l_1) & \text{if $k \in [l_1+1,l]_\mathbb{Z}$,}
    \end{cases} \\
    \sigma(k) & =
    \begin{cases}
      \sigma_1(k) & \text{if $k \in [0,l_1]_\mathbb{Z}$,} \\
      \sigma_1(l_1) + \sigma_2(k-l_1) & \text{if
        $k \in [l_1+1,l]_\mathbb{Z}$.}
    \end{cases}
  \end{align*}  
  We refer to $I$ as the \emph{product itinerary} and write
  $I = I_1 \ast I_2$.

  Conversely, if $I$ is the itinerary of some element $g\in G$
  associated to some $S$-expression of length~$l$ and if
  $l_1 \in[0,l]_\mathbb{Z}$, there is a unique decomposition
  $I = I_1 \ast I_2$ for itineraries $I_1$ of length $l_1$ and $I_2$
  of length $l_2 = l-l_1$; the corresponding elements $g_1, g_2 \in G$
  satisfy $g = g_1 g_2$.
\end{lemma-defi}
  
\begin{proof}
  The assertions in the first paragraph are easy to verify from 
  \[
    W= W_1 W_2 = \prod\nolimits_{k=1}^{l_1} X_{\iota_1(k)}
    \prod\nolimits_{k=1}^{l_2} X_{\iota_2(k)} =
    \prod\nolimits_{k=1}^{l_1} X_{\iota_1(k)} \prod\nolimits_{k=l_1 +
      1}^{l_1 + l_2} X_{\iota_2(k-l_1)}
  \]
  and the observation that, for $k \in [0,l]_\mathbb{Z}$,
  \begin{multline*}
    \sigma(k) =  - \sum\nolimits_{j=1}^{k} \rho(s_{\iota(k)}) \\
    =
    \begin{cases}
      - \sum_{j=1}^{k} \rho \big(s_{\iota_1(k)} \big) = \sigma_1(k) &
      \text{if $k \le l_1$,} \\
      - \sum_{j=1}^{l_1} \rho(s_{\iota_1(k)}) - \sum_{j=l_1+1}^{k}
      \rho\big(s_{\iota_2(k-l_1)} \big) = \sigma_1(l_1) +
      \sigma_2(k-l_1) & \text{if $k > l_1$.}
    \end{cases}
  \end{multline*}

  Conversely, let $I$ be the itinerary of an element $g$, associated
  to some $S$-expression $W = \prod_{k=1}^l X_{\iota(k)}$ of
  length~$l$, and let $l_1 \in [0,l]_\mathbb{Z}$.  Then $W$ decomposes
  uniquely as a product $W_1 W_2$ of semigroup words of lengths $l_1$
  and $l-l_2$, namely for $W_1 =\prod_{k=1}^{l_1} X_{\iota(k)}$ and
  $W_2 = \prod_{k=l_1+1}^{l} X_{\iota(k)}$.  These are $S$-expressions
  for elements $g_1, g_2$ and $g = g_1 g_2$.  Moreover, $W_1$ and
  $W_2$ give rise to itineraries $I_1, I_2$ such that $I = I_1 * I_2$.
  Since $W_1$ and $I_1$, respectively $W_2$ and $I_2$, determine one
  another uniquely, the decomposition $I = I_1 * I_2$ is unique.
\end{proof}

For a representative function
  $\mathcal{W} \colon G \to \langle X_1, \ldots, X_d \rangle$,
  $g \mapsto W_g$, as discussed in Definition~\ref{def:itinerary}, it
  is typically not the case that $W_{gh} = W_g W_h$ for $g,h \in G$.
  Consequently, it is typically not true that
  $\It_\mathcal{W}(gh) = \It_\mathcal{W}(g) \ast \It_\mathcal{W}(h)$.

\begin{lemma}
  \label{lem:writing}
  Let $G = H \wr \langle t \rangle$ be a wreath product as
  in~\eqref{eq:standing-assumption-G}, with generating set $S$ as
  in~\eqref{eq:standing-assumption-S}. Put
  \[
    C  = C(S) = 1 + \max \big\{ \vert i\vert \mid i \in \supp(s) \text{
        for some } s \in S \big\} \in \mathbb{N}.
  \]
  Then the following hold.

  \smallskip

  \begin{enumerate}[\rm (i)]
  \item \label{enu:lem-1} 
   For every $g \in G$ with itinerary~$I$,
    \[
      \minit(I) - C < \min(\supp(g))  \quad \text{and} \quad
      \max (\supp(g)) < \maxit(I) + C.
    \]
  \item \label{enu:lem-2} Let $u \in H$.  Put
      $m_S = \max\{ C, r_S \} \in \mathbb{N}$ and
    \[
      D = D(S,u) = l_S(u) + 2 \max \left\{ l_S\big(t^j \big) \mid
        0 \le j \le m_S + r_S \right\}  \in \mathbb{N}.
    \]
    Let $g \in G$ with itinerary~$I$, associated to an $S$-expression of
    length~$l_S(g)$.
    Then, for every $j \in \mathbb{Z}$ with $\minit(I) - m_S \le j \le \maxit(I) + m_S$, the elements
    $h = g u^{t^{j+\rho(g)}}, \hbar = u^{t^j} g \in G$ satisfy
    $\rho(h) = \rho(\hbar) = \rho(g)$ and the `coordinates' of
    $h$, $\hbar$ are given by
    \[
      h_{\vert i} =
      \begin{cases}
        g_{\vert i} & \text{if $i \ne j$,} \\
        g_{\vert j} \, u & \text{if $i = j$,}
      \end{cases}
      \qquad
      \hbar_{\vert i} =
      \begin{cases}
        g_{\vert i} & \text{if $i \ne j$,} \\
        u \, g_{\vert j} & \text{if $i = j$}
      \end{cases}
      \qquad \text{for $i \in \mathbb{Z}$}.
    \]
    Furthermore,
    \begin{equation*} 
      l_S(h) \le l_S(g) + D \qquad \text{and} \qquad l_S(\hbar) \le l_S(g) + D.
    \end{equation*}
  \end{enumerate}
\end{lemma}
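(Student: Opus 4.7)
The plan is to verify (i) and the two claims in (ii) in turn, relying on the itinerary expansion \eqref{equ:recover-g-from-It}. For (i), the formula $\widetilde{g} = \prod_{k=1}^{l} \widetilde{s_{\iota(k)}}^{\, t^{\sigma(k-1)}}$ shows that the $k$-th factor has support $\supp(s_{\iota(k)}) + \sigma(k-1) \subseteq [\sigma(k-1) - (C-1),\, \sigma(k-1) + (C-1)]_\mathbb{Z}$ by the definition of~$C$. Taking the union over $k \in [1,l]_\mathbb{Z}$ and using $\minit(I) \leq \sigma(k-1) \leq \maxit(I)$ confines $\supp(g)$ to $[\minit(I) - C + 1,\, \maxit(I) + C - 1]_\mathbb{Z}$, yielding both strict inequalities.

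For the coordinate formulas in (ii), the identity $t^{\rho(g)} u^{\, t^{j+\rho(g)}} = u^{\, t^j} t^{\rho(g)}$ rewrites $h = \widetilde{g}\, u^{\, t^j} t^{\rho(g)}$, so $\rho(h) = \rho(g)$. Since $u^{\, t^j} \in H^{\, t^j}$ commutes with every $g_{\vert i}^{\, t^i} \in H^{\, t^i}$ for $i \neq j$, while $g_{\vert j}^{\, t^j} u^{\, t^j} = (g_{\vert j} u)^{\, t^j}$, the stated formula for $h_{\vert i}$ follows, and the mirrored computation on the left handles~$\hbar$.

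For the length bound, starting from an $S$-expression for $g$ of length $l_S(g)$ realizing the itinerary $I = (\iota,\sigma)$, let $K^{\ast} = \{k'' \in [1,l]_\mathbb{Z} \mid j - \sigma(k''-1) \in \supp(s_{\iota(k'')})\}$ record the steps that contribute non-trivially to coordinate~$j$ of~$\widetilde{g}$. For~$h$, when $K^{\ast} \neq \varnothing$ set $K = \max K^{\ast}$ and $p = j - \sigma(K)$, so that $|p| \leq |j - \sigma(K-1)| + r_S \leq (C-1) + r_S \leq m_S + r_S$; splicing $v = t^{-p} u\, t^p = u^{\, t^p}$ between steps $K$ and $K+1$ inserts the contribution $u^{\, t^{\sigma(K)+p}} = u^{\, t^j}$ into the base-group decomposition of~$g$, and since no step past~$K$ touches position~$j$, the new contribution commutes past the remaining factors and lands to the right of $g_{\vert j}$, producing exactly~$h$. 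When $K^{\ast} = \varnothing$ we have $g_{\vert j}=1$, and it suffices to choose any~$k$ with $|j - \sigma(k)| \leq m_S$; such $k$ exists by an intermediate-value argument on~$\sigma$ (using $\sigma(0)=0$, jumps of size at most~$r_S$, the hypothesis on~$j$, and $m_S \geq r_S$). In each case $l_S(v) \leq l_S(u) + 2\, l_S(t^{|p|}) \leq D$. For~$\hbar$ the mirror construction splices $v$ immediately before $K' = \min K^{\ast}$ (when non-empty) with $p = j - \sigma(K'-1)$, so that $|p| \leq C - 1 \leq m_S$.

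The main obstacle is the length estimate: the insertion must reproduce $h$ (or $\hbar$) on the nose, not merely an element with the same coordinates. This forces the splice point to lie strictly after $\max K^{\ast}$ (respectively, strictly before $\min K^{\ast}$) so that the multiplicative order inside the possibly non-abelian factor $H^{\, t^j}$ comes out correctly; combining the support bound $|j - \sigma(K-1)| \leq C-1$ with one further step of size at most~$r_S$ is exactly what produces the $m_S + r_S$ appearing in the definition of~$D$.
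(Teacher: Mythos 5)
Your argument is correct and follows essentially the same route as the paper's proof: establish~\eqref{enu:lem-1} from the itinerary expansion~\eqref{equ:recover-g-from-It}, read off the coordinate formulas directly, and obtain the length bound by decomposing the minimal $S$-expression at a suitable splice point and inserting the short word $t^{-p}ut^p$. The only cosmetic difference is the choice of splice point -- the paper takes the largest $k$ with $\vert j-\sigma(k)\vert \le m_S$ (and splits at $k+1$), whereas you take $K=\max K^\ast$ (the last step affecting coordinate $j$) -- but both choices yield the bound $\vert p\vert \le m_S+r_S$ and guarantee, via~\eqref{enu:lem-1}, that the tail of the decomposition commutes with $u^{t^j}$.
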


\begin{proof}
  We write $I = (\iota,\sigma)$ for the given itinerary
    of~$g$, and $l$ denotes the length of~$I$.

  \smallskip
  
  \eqref{enu:lem-1} From \eqref{equ:recover-g-from-It} it follows that
  \begin{align*}
    \supp(g) & \subseteq \bigcup_{1 \le k \le l} \left( \sigma(k-1) +
               \supp(s_{\iota(k)}) \right) \\
    & \subseteq \bigcup_{1 \le k \le l}
    [\sigma(k-1)-C+1,\sigma(k-1)+C-1]_\mathbb{Z};
  \end{align*}
  from this inclusion the two inequalities follow readily.

  \smallskip
  
  \eqref{enu:lem-2} In addition we now have
    $l = l_S(g)$.  The first assertions are very easy to verify.  We
  justify the upper bound for $l_S(h)$; the bound for $l_S(\hbar)$
  is derived similarly.
  
  Since $\minit(I) - m_S \le j \le \maxit(I) + m_S$ and
  since itineraries proceed, in the second coordinate, by steps of
  length at most~$r_S \le m_S$, there exists
  $k \in [0,l]_\mathbb{Z}$ such that
  $\vert j - \sigma(k)\vert \leq m_S$.  If
    $\vert j - \sigma(l)\vert \leq m_S$ we put $k = l-1$; otherwise
  we choose $k$ maximal with $\vert j - \sigma(k)\vert \leq m_S$.
  Next we decompose the itinerary $I$ as the product
  $I = I_1 \ast I_2$ of itineraries $I_1$ of length
  $l_1 = k+1$ and $I_2$ of length
  $l_2 = l-k-1$; compare with
    Lemma~\ref{lem:itindecomposition}.

  We denote by $g_1 = \widetilde{g_1} t^{-\sigma(k+1)}$
    and $g_2 = \widetilde{g_2} t^{\sigma(k+1)+\rho(g)}$ the elements
    corresponding to $I_1$ and~$I_2$ so that
    $g = g_1 g_2 = \widetilde{g_1} \widetilde{g_2}^{t^{\sigma(k+1)}}
    t^{\rho(g)}$.  Moreover, we observe from
    $\vert j - \sigma(k+1) \vert \le m_S+r_S$ that
    \[
     g_3 = u^{t^{j-\sigma(k+1)}} = t^{-j+\sigma(k+1)} \,
        u \, t^{j-\sigma(k+1)}
    \]
    has
    length~$l_3 \le l_S(u) + 2\, l_S \big( t^{j-\sigma(k+1)} \big) \le
    D$.  Our choice of $k$ guarantees that the support of
    $\widetilde{g_2}^{\, t^{\sigma(k+1)}}$ does not overlap with
    $\{j\} = \supp(u^{t^j})$; compare with~\eqref{enu:lem-1}.  Thus
    $\widetilde{g_2}^{\, t^{\sigma(k+1)}}$ and $u^{t^{j}}$, both in
    the base group, commute with one another.  This gives
    \begin{multline*}
      h = g u^{t^{j+\rho(g)}} = \widetilde{g_1}
      \widetilde{g_2}^{t^{\sigma(k+1)}} u^{t^j} t^{\rho(g)} =
      \widetilde{g_1} u^{t^j} \widetilde{g_2}^{t^{\sigma(k+1)}}
      t^{\rho(g)} \\ = g_1 t^{-j + \sigma(k+1)} u t^{j - \sigma(k+1)} g_2
      = g_1 g_3 g_2,
    \end{multline*}
    and we conclude that
    $l_S(h) \le l_1 + l_2 + l_3 \leq l +D = l_S(g) + D$.
\end{proof}


\section{Proofs of Theorems~\ref{thm:main}
  and~\ref{thm:main-density}}

\label{sec:proof-of-main-theorem}

First we explain how Theorem~\ref{thm:main} follows from
Theorem~\ref{thm:main-density}.  The first ingredient is the following
general lemma.

\begin{lemma}[\textup{Antol\'in, Martino and
    Ventura~\cite[Lem.~3.1]{AnMaVe17}}]
  \label{lem:two steps}
  Let $G = \langle S \rangle$ be a group, with finite generating
  set~$S$.  Suppose that there exists a subset $X \subseteq G$
  satisfying
  
  \smallskip
  
  \begin{enumerate}[\rm (a)]
  \item $\dens_S(X) = 0$;
  \item
    $\sup \left\{ \frac{\vert C_G(g) \cap B_S(n)\vert }{\vert B_S(n)\vert } \mid g \in G
      \smallsetminus X \right\} \to 0$ as $n \to \infty$.
  \end{enumerate}
  
  \smallskip
  
  \noindent Then $G$ has degree of commutativity $\dc_S(G)=0$.
\end{lemma}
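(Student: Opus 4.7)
The plan is to bound the number of commuting pairs in $B_S(n) \times B_S(n)$ by splitting the $g$-coordinate according to whether $g \in X$ or $g \in G \smallsetminus X$, and then to use each of the two hypotheses to dispose of the corresponding contribution.

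Concretely, I would first rewrite the numerator of $\dc_S(G)$ as
\[
  \big\vert \{ (g,h) \in B_S(n) \times B_S(n) \mid gh=hg \} \big\vert =
  \sum_{g \in B_S(n)} \vert C_G(g) \cap B_S(n) \vert,
\]
and then split this sum as
\[
  \sum_{g \in X \cap B_S(n)} \vert C_G(g) \cap B_S(n) \vert +
  \sum_{g \in (G \smallsetminus X) \cap B_S(n)} \vert C_G(g) \cap
  B_S(n) \vert.
\]
For the first summand, I would use the trivial estimate $\vert C_G(g) \cap B_S(n)\vert \le \vert B_S(n)\vert$, giving an upper bound $\vert X \cap B_S(n)\vert \cdot \vert B_S(n)\vert$; dividing by $\vert B_S(n)\vert^2$ yields $\vert X \cap B_S(n)\vert/\vert B_S(n)\vert$, which tends to $0$ by hypothesis~(a) (note that $\dens_S(X) = 0$ forces the limsup, and hence the limit of the nonnegative sequence, to be~$0$). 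For the second summand, setting $\varepsilon_n = \sup \{ \vert C_G(g) \cap B_S(n)\vert/\vert B_S(n)\vert \mid g \in G \smallsetminus X \}$, hypothesis~(b) gives $\varepsilon_n \to 0$, and the summand is at most $\vert B_S(n)\vert \cdot \varepsilon_n \vert B_S(n)\vert$, so its contribution to the ratio is at most $\varepsilon_n$.

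Adding these bounds gives
\[
  \frac{\big\vert \{ (g,h) \in B_S(n) \times B_S(n) \mid gh=hg \} \big\vert}{\vert B_S(n)\vert^2} \le \frac{\vert X \cap B_S(n)\vert}{\vert B_S(n)\vert} + \varepsilon_n \longrightarrow 0
\]
as $n \to \infty$, and passing to the limsup yields $\dc_S(G) = 0$.

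There is no real obstacle here; the proof is essentially a bookkeeping argument once one observes that the two hypotheses are precisely tailored to control the two parts of the natural partition of $B_S(n)$ according to membership in~$X$. The only minor point worth stating carefully is that condition~(a) gives convergence (not merely a bounded limsup) of $\vert X \cap B_S(n)\vert/\vert B_S(n)\vert$ to~$0$, which is what allows the estimate to be combined directly with~$\varepsilon_n$ rather than via a separate limsup argument.
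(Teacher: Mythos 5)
Your proof is correct; since this lemma is quoted in the paper without proof (it is cited to Antol\'in, Martino and Ventura's original article), the natural comparison is with that source, and the bookkeeping argument you give—splitting the sum $\sum_{g \in B_S(n)} \vert C_G(g) \cap B_S(n) \vert$ according to whether $g \in X$ or $g \notin X$, using the trivial bound $\vert C_G(g) \cap B_S(n) \vert \leq \vert B_S(n) \vert$ on the $X$-part and hypothesis~(b) uniformly on the complement—is precisely the standard argument. Your remark that $\dens_S(X) = 0$ forces genuine convergence (not merely vanishing limsup) of $\vert X \cap B_S(n) \vert / \vert B_S(n) \vert$ is a correct and worthwhile observation, though in fact even without it the argument would go through: since $\dc_S(G)$ is itself defined as a limsup, one could instead pass to a subsequence along which the commuting-pair ratio attains its limsup, and on that subsequence both error terms are eventually small.
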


The second ingredient comes from~\cite[\textsection 2.1]{Co18}, where
Cox shows the following.  If $G = H \wr \langle t \rangle$ is the
wreath product of a finitely generated group $H \ne 1$ and an infinite
cyclic group $\langle t \rangle$, with base group~$N$,
and if $S$ is any finite generating set for~$G,$ then
\[
  \sup \left\{ \tfrac{\vert C_G(g) \cap B_S(n)\vert }{\vert
      B_S(n)\vert } \mid g \in G \smallsetminus N
  \right\} \to 0 \qquad \text{as} \quad n \to \infty.
\]

The idea behind Cox' proof is as follows.  For
  $g \in G \smallsetminus N$, the centraliser $C_G(g)$ is cyclic and
  the `translation length' of $g$ with respect to~$S$ is uniformly
  bounded away from~$0$.  The latter means that there exists
  $\tau_S > 0$ such that
\[
  \inf_{n \in \mathbb{N}} \left\{ \tfrac{ l_S(g^n) }{ n } \mid g \in G
    \smallsetminus N \right\} \geq \tau_S.
\]
Consequently, for $g \in G \smallsetminus N$ the function
$n \mapsto \vert C_G(g) \cap B_S(n) \vert $ is bounded uniformly by a
linear function, while $G$ has exponential word growth.

Thus, Theorem~\ref{thm:main-density} implies Theorem~\ref{thm:main},
and it remains to establish Theorem~\ref{thm:main-density}.
Throughout the rest of this section, let
\begin{equation*}
  G = H \wr \langle t \rangle = N \rtimes \langle t \rangle \qquad
  \text{with base group} \qquad N = \bigoplus\nolimits_{i
    \in \mathbb{Z}} H^{t^i} 
\end{equation*}
be the wreath product of a finitely generated subgroup~$H$ and an
infinite cyclic subgroup $\langle t \rangle$, just as
in~\eqref{eq:standing-assumption-G}.  The exceptional case $H = 1$ poses no obstacle, hence we assume $H \ne 1$.  We fix a finite
symmetric generating set $S = \{s_1, \ldots, s_d\}$ for~$G$ and employ
the notation established around~\eqref{eq:standing-assumption-S}.
Finally, we recall that $G$ has exponential word growth and we write
\[
  \lambda = \lambda_S(G) > 1
\]
for the exponential growth rate of $G$ with respect to~$S$;
see~\eqref{eq:exponentail-rate}.

We start by showing that the subset of $N$ consisting of all elements
with suitably bounded support is negligible in the computation of the
density of~$N$. 

\begin{proposition}
  \label{prop:small-support}
  Fix a representative function $\mathcal{W}$ which yields for each
  element of $G$ an $S$-expression of shortest possible length and let
  $q \colon \mathbb{N} \to \mathbb{R}_{\ge 1}$ be a non-decreasing
  unbounded function such that $q \in o(\log n)$.

Then the sequence of sets
  \begin{equation*}
    R_q(n) = R_{\mathcal{W},q}(n) = \{g \in N \cap B_S(n) \mid
    \maxit_\mathcal{W}(g) - \minit_\mathcal{W}(g) \le q(n) \},
  \end{equation*}
  indexed by $n \in \mathbb{N}$, satisfies
  \begin{equation*}
    \lim_{n \to \infty} \frac{\vert R_{q}(n)\vert }{\vert B_S(n)\vert } = 0.
  \end{equation*}
\end{proposition}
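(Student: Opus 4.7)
My plan is to estimate $\vert R_q(n)\vert$ combinatorially through a support-then-coordinates count, and then compare against the lower bound $\vert B_S(n)\vert \ge \lambda^n$, where $\lambda = \lambda_S(G) > 1$.

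The first step exploits Lemma~\ref{lem:writing}(i). Since $g \in R_q(n) \subseteq N$ forces $\rho(g) = 0$, the $\mathcal{W}$-itinerary $I_g = (\iota_g, \sigma_g)$ satisfies $\sigma_g(0) = \sigma_g(l_S(g)) = 0$, so $\minit_{\mathcal{W}}(g) \le 0 \le \maxit_{\mathcal{W}}(g)$; combined with $\maxit_{\mathcal{W}}(g) - \minit_{\mathcal{W}}(g) \le q(n)$, this yields
\[
  \supp(g) \subseteq [-q(n) - C + 1, \, q(n) + C - 1],
\]
an interval of length $L(n) := 2q(n) + 2C - 1$. In particular, the number of candidate supports is at most $2^{L(n)} = n^{o(1)}$, since $q \in o(\log n)$.

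The second step counts, for each candidate support, the tuples of $H$-coordinates $(g_{\vert i})_{i \in \supp(g)}$ compatible with the length constraint $l_S(g) \le n$. Expanding a shortest $S$-expression via \eqref{equ:recover-g-from-It} shows that each letter $s_{\iota(k)}$ contributes a uniformly bounded number of $T_H$-letters to $\sum_i l_{T_H}(g_{\vert i})$, where $T_H$ is the finite generating set of $H$ formed by the non-trivial coordinate entries of the various $\widetilde{s_j}$. Hence $\sum_i l_{T_H}(g_{\vert i}) \le \Delta n$ for some $\Delta = \Delta(S) > 0$. Summing the product $\prod_i \vert B_{H,T_H}(k_i)\vert$ over compositions $\sum_i k_i \le \Delta n$ with $i$ in an interval of length $L(n)$ then produces a bound of the form
\[
  \vert R_q(n)\vert \le e^{o(\log^2 n)} \cdot \lambda_H^{\Delta n (1+o(1))},
\]
where $\lambda_H$ is the exponential growth rate of $H$ with respect to $T_H$.

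The main obstacle, as I anticipate it, lies in closing the comparison $\vert R_q(n)\vert/\lambda^n \to 0$: the naive count above only beats $\lambda^n$ when $\lambda_H^\Delta < \lambda$, which is not automatic in the wreath product setting. When this strict inequality is unavailable, I would pivot to constructing an injective or bounded-multiplicity map $\phi: R_q(n) \times A(n) \to B_S(n + \beta(n))$, with $\vert A(n)\vert \to \infty$ and $\beta \in o(n)$, by using Lemma~\ref{lem:writing}(ii) to attach extra $H$-letters at positions $j$ close to each element's itinerary without inflating its length. Combined with Proposition~\ref{pro:exists-q} to control the ratio $\vert B_S(n+\beta(n))\vert/\vert B_S(n)\vert$, this would convert the sub-exponential counting slack into the desired vanishing of $\vert R_q(n)\vert/\vert B_S(n)\vert$.
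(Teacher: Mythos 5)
Your first step (a support-then-coordinates count) is sound as far as it goes, and you correctly diagnose that it cannot beat $\lambda^n$ when $H$ has exponential growth: $\lambda_H^\Delta < \lambda$ is not available in general. So the burden falls entirely on your pivot, and that is where the gap lies. The map $\phi \colon R_q(n) \times A(n) \to B_S(n+\beta(n))$ you sketch — ``attach extra $H$-letters at positions $j$ close to each element's itinerary'' — does not come with any mechanism to recover $g$ from its perturbation $gu^{t^j}$ without also knowing $j$, because the positions $j$ overlap $\supp(g)$. This kills injectivity (or bounded multiplicity), and naively placing the perturbations outside $\supp(g)$ instead forces $j$ to be large, which inflates $\beta(n)$ exponentially against $\vert A(n)\vert$. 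Also, Proposition~\ref{pro:exists-q} is not the right tool here: it is used in the paper for Theorem~\ref{thm:main-2}, not for this proposition, and it only gives an unbounded ratio, whereas your pivot would need quantitative control of $\vert B_S(n+\beta(n))\vert/\vert B_S(n)\vert$ against an $\vert A(n)\vert$ that you have not constructed.

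The missing idea is a dichotomy inside $R_q(n)$. The paper fixes an auxiliary $f$ with $f \in o(n/q(n))$ and splits off $R_q^f(n)$, the subset where every coordinate $g_{\vert i}$ lies in $B_{H,S_0}(f(n))$; for those elements a direct count works precisely because the coordinates are forced to be short — this is the role of $f$, and it is exactly what your unconditional Step~1 lacks. For $g \in R_q(n)\smallsetminus R_q^f(n)$, one coordinate has $S_0$-length at least $f(n)$, so a shortest itinerary for $g$ passes through the corresponding level at least $f(n)$ ``productively distinct'' times (the $\kappa(1), \dots, \kappa(\ell)$ of~\eqref{eq:tower}). Splitting the itinerary at each such point and inserting a large power $t^{-3q(n)-4C}$ (not an $H$-letter) produces $\ell \geq f(n)$ elements $\dot g(j) \in B_S(n + m(n))$ whose supports are visibly disconnected into two blocks separated by a gap of size $\geq q(n)+2C$; that gap lets you recover $g$ without knowing $j$, and the reduced-product choice makes the $\dot g(j)$ pairwise distinct. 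Since $q \in o(\log n)$ makes both $(\lambda+1)^{m(n)}$ and $\vert B_{H,S_0}(f(n))\vert^{O(q(n))}$ sub-exponential, and $f$ can simultaneously be chosen to dominate $(\lambda+1)^{m(n)}$, the two halves close. Your proposal has neither the split nor the $t$-power insertion, and without them the argument does not go through.
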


The proof of Proposition~\ref{prop:small-support} is
  preceded by some preparations and two auxiliary lemmata.  We keep in
  place the set-up from Proposition~\ref{prop:small-support}.  For
$i \in \mathbb{Z}$, we write $H_i = H^{t^i}$.  Using the notation
established in Section~\ref{sec:wreath-products}, we accumulate the
`coordinates' of elements of $S$ in a set
\[
  S_0 = \{ s_{\vert i} \mid s \in S, i \in \mathbb{Z} \} = \left\{
    (s_j)_{\vert i} \mid 1 \le j \le d \text{ and } i \in \mathbb{Z}
  \right\} \subseteq H = H_0,
\]
we set $S_i = S_0^{\, t^i} \subseteq H_i$ for $i \in \mathbb{Z}$.
Then $S_i$ is a finite symmetric generating set of~$H_i$ for each
$i \in \mathbb{Z}$. Indeed, every element $h \in H$
  satisfies $h = \widetilde{h} = h_{\vert 0}$ and can thus be written
  in the form
  \[
    h = \left( \prod\nolimits_{k=1}^l \widetilde{s_{\iota(k)}}^{\,
      t^{\sigma(k-1)}} \right) {\vert_0} = \prod\nolimits_{k=1}^{l}
    \big( \widetilde{s_{\iota(k)}}_{\vert \, -\sigma(k-1)} \big),
  \]
  based upon a suitable itinerary $I = (\iota,\sigma)$ of length~$l$.
  We conclude that $H = \langle S_0 \rangle$ and consequently
  $H_i = \langle S_i \rangle$ for $i \in \mathbb{Z}$; the generating
  systems inherit from $S$ the property of being symmetric.

Moreover, we have
$\vert B_{H_i,S_i}(n)\vert = \vert B_{H,S_0}(n)\vert $ for all
$n \in \mathbb{N}$; consequently,
\[
  \lambda_{S_0}(H) = \lambda_{S_i}(H_i) \qquad \text{for all
    $i \in \mathbb{Z}$.}
\]

It is convenient to split the analysis of the
  set~$R_q(n)$ from Proposition~\ref{prop:small-support} into two
  parts.  First we take care of elements whose `coordinates' fall
  within sufficiently small balls around $1$ in~$H$, with respect to
  the generating set~$S_0$.

\begin{lemma}
  \label{lem:small-coordinates}
  In addition to the set-up above, let
  $f \colon \mathbb{N} \rightarrow \mathbb{R}_{> 0}$ be a
  non-decreasing unbounded function such that $f \in o(n/q(n))$.

  Then the sequence of subsets
  \[
    R_q^f(n) = R_{\mathcal{W},q}^f(n) =\big\{ g \in R_q(n) \mid
    g_{\vert i} \in B_{H,S_0}(f(n)) \text{ for all } i \in \mathbb{Z}
    \big\} \subseteq R_q(n),
  \]
  indexed by $n \in \mathbb{N}$, satisfies
  \[
    \lim_{n \rightarrow \infty} \frac{\vert R_q^f(n) \vert}{\vert
      B_S(n) \vert } = 0.
  \]
\end{lemma}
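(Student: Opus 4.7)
The plan is to bound $\vert R_q^f(n) \vert$ from above by a straightforward counting argument and then compare this bound with $\vert B_S(n) \vert \ge \lambda^n$, exploiting $\lambda > 1$ together with the decisive hypothesis $f(n) q(n) \in o(n)$ (which is just $f \in o(n/q(n))$; interestingly the stronger assumption $q \in o(\log n)$ plays no role in this particular lemma).

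Fix $g \in R_q^f(n)$ and let $I = (\iota,\sigma) = \It_\mathcal{W}(g)$ be its $\mathcal{W}$-itinerary, of length $l_S(g) \le n$. By Lemma~\ref{lem:writing}\,(i), the support of $g$ is contained in the interval
\[
J(g) = [\minit(I) - C + 1,\, \maxit(I) + C - 1]_\mathbb{Z},
\]
which consists of $\maxit(I) - \minit(I) + 2C - 1 \le q(n) + 2C - 1$ integers, using that $g \in R_q(n)$. Since $\vert \rho(s) \vert \le r_S$ for every $s \in S$ (recall $S$ is symmetric), every itinerary value satisfies $\vert \sigma(k) \vert \le r_S \, l_S(g) \le r_S n$, so the left endpoint of $J(g)$ lies in an interval of length at most $2 r_S n + 1$. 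Each element $g \in R_q^f(n)$ is therefore determined by the choice of one such interval together with an assignment of a value in $B_{H,S_0}(f(n))$ to each of its at most $q(n) + 2C - 1$ positions. Fixing $M > 1$ with $\vert B_{H,S_0}(k) \vert \le M^k$ for all $k \in \mathbb{N}_0$ (for instance $M = \vert S_0 \vert + 1$), this yields the combinatorial estimate
\[
\vert R_q^f(n) \vert \le (2 r_S n + 1) \cdot M^{f(n)(q(n) + 2C - 1)}.
\]

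To conclude, divide by $\vert B_S(n) \vert \ge \lambda^n$ and pass to logarithms; it suffices to show
\[
\log(2 r_S n + 1) + f(n)(q(n) + 2C - 1) \log M - n \log \lambda \;\longrightarrow\; -\infty.
\]
The first summand is $O(\log n) = o(n)$; the hypothesis $f \in o(n/q(n))$ gives $f(n) q(n) \in o(n)$, and because $q(n) \ge 1$ eventually, also $f(n) \in o(n)$, so the second summand is $o(n)$ too. The third summand grows like $-n \log \lambda$ with $\log \lambda > 0$, and therefore dominates. Hence $\vert R_q^f(n) \vert / \vert B_S(n) \vert \to 0$, as required. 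No genuine obstacle is expected; the whole argument works only because Lemma~\ref{lem:writing}\,(i) together with the defining property of $R_q(n)$ confines the nontrivial coordinates of $g$ to roughly $q(n)$ positions, making the exponent of $M$ of order $f(n) q(n) = o(n)$ rather than a larger quantity like $f(n) \cdot n$ — this is the sole point where the counting could go wrong.
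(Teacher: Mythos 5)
Your proof is correct and takes essentially the same approach as the paper's: bound $\vert R_q^f(n)\vert$ by counting, using that Lemma~\ref{lem:writing}(i) confines $\supp(g)$ to roughly $q(n)$ positions, then compare against $\vert B_S(n)\vert \ge \lambda^n$ using $f(n)q(n)\in o(n)$. The only (harmless) inefficiency is the factor $2r_S n+1$ for the position of the support interval: since $g\in N$ forces $\rho(g)=0$, one has $\sigma(0)=\sigma(l)=0$ and hence $\minit(I)\le 0\le \maxit(I)$, so the interval is automatically contained in the fixed window $[-q(n)-C+1,\,q(n)+C-1]_\mathbb{Z}$; the paper uses this to get the cleaner bound $\vert R_q^f(n)\vert\le\vert B_{H,S_0}(f(n))\vert^{2q(n)+2C}$ directly, but your extra polynomial factor is absorbed by the same $o(n)$ estimate.
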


\begin{proof}  
 Let $C = C(S) \in \mathbb{N}$ be as is in
  Lemma~\ref{lem:writing}\eqref{enu:lem-1} and choose
  $C' \in \mathbb{N}$ such that $\lambda^{C'} > \lambda_{S_0}(H)$.
  Then we have, for all sufficiently large $n \in \mathbb{N}$,
  \[
    \big\vert R_q^f(n) \big\vert \leq \big\vert B_{H,S_0}(f(n))
    \big\vert^{2q(n) +2C} \le \lambda^{2C'q(n)f(n) + 2C'Cf(n)} \le
    \lambda^{4C'C q(n) f(n)}.
  \]
  From $f\in o(n/q(n))$ we obtain
  \[
    4C'C q(n) f(n) - n \;\to\; -\infty \qquad \text{as $n \to \infty$}
  \]
  and hence
  \[
    \frac{\vert R_q^f(n)\vert }{\vert B_S(n)\vert } \le \lambda^{4C'C q(n) f(n) - n}
    \;\to\; 0 \qquad \text{as $n \to \infty$.} 
  \] 
\end{proof} 

Next we consider $R_q(n) \smallsetminus R_q^f(n)$, for a
  function $f$ as in Lemma~\ref{lem:small-coordinates} and
  $n \in \mathbb{N}$. For every
$g \in R_q(n) \smallsetminus R_q^f(n),$ we pick $i(g) \in \mathbb{Z}$
with
\[
  \minit_\mathcal{W}(g) - C < i(g) < \maxit_\mathcal{W}(g) +C \qquad
  \text{and} \qquad g_{\vert i(g)} \notin B_{S_0}(f(n)),
\]
where $C = C(S) \in \mathbb{N}$ continues to denote
  the constant from Lemma~\ref{lem:writing}\eqref{enu:lem-1}.  Let
$I = (\iota,\sigma)$, viz.\ $I_g = (\iota_g,\sigma_g)$, denote the
$\mathcal{W}$-itinerary of~$g$.  Then
\[
  g_{\vert i(g)} = \prod\nolimits_{k=1}^{l_S(g)} (s_{\iota(k)})_{\vert
    \, i(g)-\sigma(k-1)}.
\]
By successively cancelling sub-products of adjacent factors that
evaluate to~$1$ and have maximal length with this property (in an
orderly fashion, from left to right, say), we arrive at a `reduced'
product expression
\begin{equation}
  \label{eq:tower}
  g_{\vert  i(g)} = \prod_{j=1}^{\ell}
  (s_{\iota(\kappa(j))})_{\vert \, i(g)-\sigma(\kappa(j)-1)},
\end{equation}
for some $\ell = \ell_g \in [1,l_S(g)]_\mathbb{Z}$ and an increasing
function
$\kappa = \kappa_g \colon [1,\ell]_\mathbb{Z} \to [1,
l_S(g)]_\mathbb{Z}$ that picks out a subsequence of factors.  In
particular, this means that, for $j_1, j_2 \in [1,\ell]_\mathbb{Z}$
with $j_1 < j_2$,
\begin{equation}
  \label{eq:in-particular-kappa}
  \begin{split}
    \prod_{k=\kappa(j_1)+1}^{\kappa(j_2)} (s_{\iota(k)})_{\vert \,
      i(g)-\sigma(k-1)} %
    & = \prod_{j = j_1+1}^{j_2} \; \prod_{k=\kappa(j-1)+1}^{\kappa(j)}
    (s_{\iota(k)})_{\vert \, i(g)-\sigma(k-1)} \\
    & = \prod_{j = j_1+1}^{j_2} (s_{\iota(\kappa(j))})_{\vert \,
      i(g)-\sigma(\kappa(j)-1)} \ne 1,
  \end{split}
\end{equation}
and moreover we have
$l_S(g) \ge \ell \geq l_{S_0}(g_{\vert i(g)}) \geq f(n).$

By means of suitable perturbations, we aim to produce
  from $g$ a collection of $\ell$ distinct elements
  $\dot g(1), \ldots, \dot g(\ell)$ which each carry sufficient
  information to `recover' the initial element~$g$. We proceed as
  follows. For each choice of $j \in [1,\ell]_\mathbb{Z}$ we
decompose the itinerary $I$ for $g$ into a product
$I = I_{j,1} \ast I_{j,2}$ of itineraries of length $\kappa(j)$ and
$l_S(g) - \kappa(j)$; compare with Lemma~\ref{lem:itindecomposition}.
Then $g = g_{j,1} g_{j,2}$, where
$g_{j,1}, g_{j,2}$ denote the elements of $G$
corresponding to $I_{j,1}, I_{j,2}$.  From $g \in R_q(n)$ it follows
that $\maxit(I_{j,1}) - \minit(I_{j,1})$ and
$\maxit(I_{j,2}) - \minit(I_{j,2})$ are bounded by~$q(n)$; in
particular, $\rho(g_{j,1}) \in [-q(n),q(n)]_\mathbb{Z}$.

We define
\begin{equation} \label{eq:gdot} \dot g(j) = g_{j,1} \,
  t^{-3q(n)-4C} \, g_{j,2}
\end{equation}
with $C=C(S)$ as above; see Figure~\ref{fig:hi} for a
pictorial illustration, which features an additional parameter $\tau$
that we introduce in the proof of Lemma~\ref{lem:big-coordinates}.

\begin{figure}[H]
  \centering \hspace{-12pt}
  \begin{tikzpicture}[x=0.70pt,y=0.70pt,yscale=-1,xscale=1]

    \draw (100,100) -- (500,100) ;
    \draw (150,95) -- (150,105) ;
    \draw (200.72,95.13) .. controls (200.74,90.46) and (198.42,88.12)
    .. (193.75,88.1) -- (169.85,88) .. controls (163.18,87.97) and
    (159.86,85.63) .. (159.88,80.96) .. controls (159.86,85.63) and
    (156.52,87.95) .. (149.85,87.92)(152.85,87.94) -- (125.96,87.83)
    .. controls (121.29,87.81) and (118.95,90.13) .. (118.93,94.8) ;
    \draw (371,94.86) .. controls (371.02,90.19) and (368.7,87.85)
    .. (364.03,87.83) -- (340.14,87.73) .. controls (333.47,87.7) and
    (330.15,85.36) .. (330.17,80.69) .. controls (330.15,85.36) and
    (326.81,87.68) .. (320.14,87.65)(323.14,87.66) -- (296.25,87.56)
    .. controls (291.58,87.54) and (289.24,89.86) .. (289.22,94.53) ;
    \draw (203,83.61) -- (287,83.61) ; \draw [shift={(289,83.61)},
    rotate = 180] [fill={rgb, 255:red, 0; green, 0; blue, 0 } ][line
    width=0.08] [draw opacity=0] (9.6,-2.4) -- (0,0) -- (9.6,2.4) --
    cycle ; \draw [shift={(201,83.61)}, rotate = 0] [fill={rgb,
      255:red, 0; green, 0; blue, 0 } ][line width=0.08] [draw
    opacity=0] (9.6,-2.4) -- (0,0) -- (9.6,2.4) -- cycle ;
    \draw (201,78.61) -- (201,88.61) ;
    \draw (289,78.61) -- (289,88.61) ; \draw (289,97.5) -- (289,102.5)
    ;
    \draw (201,97.5) -- (201,102.5) ;
    \draw (119,97.5) -- (119,102.5) ;
    \draw (371.5,97.5) -- (371.5,102.5) ;

    \draw (146,105) node [anchor=north west][inner sep=0.75pt]
    [align=left] {{\scriptsize 0}};
    \draw (105,65) node [anchor=north west][inner sep=0.75pt]
   [font=\tiny] [align=left] {$\displaystyle \ \supp(g_{j,1})$};
    \draw (315,65) node [anchor=north west][inner sep=0.75pt]
    [font=\tiny] 
    [align=left] {$\displaystyle \supp(g_{j,2})$,
      shifted by $\displaystyle -\rho(g_{j,1})+3q(n)+4C$};
    \draw (239.11,70) node [anchor=north west][inner sep=0.75pt]
    [font=\scriptsize] [align=left] {$\displaystyle \tau$};
  \end{tikzpicture}
  \caption{An illustration of the factorisation
    $\dot g(j) = g_{j,1} \, t^{-3q(n)-4C} \,
    g_{j,2}$.}
  \label{fig:hi}
\end{figure}
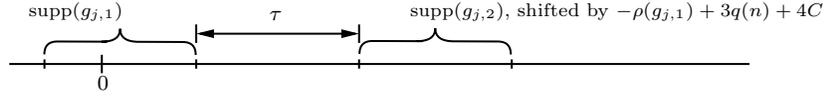

\begin{lemma}
  \label{lem:big-coordinates} In the set-up above, the
    elements $\dot{g}(1), \ldots, \dot{g}(\ell)$ defined in
    \eqref{eq:gdot} satisfy the following:
  \begin{enumerate}[\rm (i)]
  \item \label{enu:ball} for each $j \in [1, \ell]_{\mathbb{Z}}$ the
    element $\dot g(j)$ lies in $B_S( n + (3 q(n) + 4C)l_S(t))$;
  \item \label{enu:recover} for each $j \in [1,\ell]_\mathbb{Z}$ the
    original element $g$ can be recovered from $\dot g(j)$;
  \item \label{enu:pw-distinct} the elements
    $\dot g(1), \ldots, \dot g(\ell)$ are pairwise distinct.
  \end{enumerate}
\end{lemma}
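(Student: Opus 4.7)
My plan is to dispose of (i) by the triangle inequality in the word metric and then derive (ii) and (iii) from a common analysis of the supports of the two factors composing the base group of $\dot g(j)$; the main delicate ingredient throughout will be showing that these two supports are genuinely disjoint. For (i), since the $\mathcal{W}$-itinerary $I$ of $g$ arises from a shortest $S$-expression, the decomposition $I = I_{j,1} \ast I_{j,2}$ from Lemma~\ref{lem:itindecomposition} produces $S$-expressions for $g_{j,1}$ and $g_{j,2}$ whose lengths sum to $l_S(g) \le n$, and hence
\[
  l_S(\dot g(j)) \le l_S(g_{j,1}) + (3q(n) + 4C)\, l_S(t) + l_S(g_{j,2}) \le n + (3q(n)+4C)\, l_S(t).
\]

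For the support analysis, I would first expand $\dot g(j)$ and collect $t$-powers on the right to obtain
\[
  \widetilde{\dot g(j)} = \widetilde{g_{j,1}} \cdot (\widetilde{g_{j,2}})^{t^{-\rho(g_{j,1}) + 3q(n) + 4C}}, \qquad \rho(\dot g(j)) = \rho(g) - 3q(n) - 4C.
\]
By Lemma~\ref{lem:writing}(i), $\supp(\widetilde{g_{j,1}})$ lies in $[\minit(I_{j,1}) - C + 1,\, \maxit(I_{j,1}) + C - 1]$, and the support of the shifted second factor equals the analogous interval for $I_{j,2}$ translated by $-\rho(g_{j,1}) + 3q(n) + 4C$. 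Since $g \in R_q(n)$ and the second component of $I$ vanishes at $0$, each of $|\maxit(I_{j,1})|$, $|\minit(I_{j,2})|$ and $|\rho(g_{j,1})|$ is at most $q(n)$; substituting these bounds shows the gap between the two intervals is at least $2C + 2$, so the two supports are disjoint and can be told apart by a fixed index threshold computable from $n$ alone.

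For (iii), observe that $i(g) \in (\minit(I) - C,\, \maxit(I) + C)$ lies strictly below the right support interval, so at coordinate $i(g)$ the shifted second factor contributes nothing, giving
\[
  (\dot g(j))_{\vert i(g)} = (g_{j,1})_{\vert i(g)} = \prod\nolimits_{k=1}^{\kappa(j)} (s_{\iota(k)})_{\vert i(g) - \sigma(k-1)}.
\]
For $j_1 < j_2$, the quotient $(g_{j_1,1})_{\vert i(g)}^{-1} (g_{j_2,1})_{\vert i(g)}$ reduces to $\prod_{k=\kappa(j_1)+1}^{\kappa(j_2)} (s_{\iota(k)})_{\vert i(g) - \sigma(k-1)}$, which is non-trivial by~\eqref{eq:in-particular-kappa}. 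Hence the coordinates $(\dot g(j))_{\vert i(g)}$ are pairwise distinct as $j$ varies, and so are the $\dot g(j)$ themselves.

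For (ii), one reads off $\rho(g) = \rho(\dot g(j)) + 3q(n) + 4C$ directly from $\dot g(j)$; the base group part $\widetilde{\dot g(j)}$ splits at the separation threshold into a left piece equal to $\widetilde{g_{j,1}}$ and a right piece equal to $(\widetilde{g_{j,2}})^{t^{-\rho(g_{j,1}) + 3q(n) + 4C}}$, and conjugating the latter by $t^{-(3q(n)+4C)}$ and multiplying by the left piece reassembles $\widetilde{g}$, which together with $\rho(g)$ reconstructs $g$. The main subtlety, on which everything else rests, is the bookkeeping in the support separation: one has to control $|\rho(g_{j,1})|$, the oscillations $\maxit(I_{j,i}) - \minit(I_{j,i})$ for $i \in \{1,2\}$, and the $C$-sized margin from Lemma~\ref{lem:writing}(i) simultaneously, and verify that $3q(n) + 4C$ is indeed large enough to yield a strictly positive gap.
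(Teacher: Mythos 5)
Your argument is correct and follows essentially the same route as the paper's: bound $l_S(\dot g(j))$ via the decomposition of the itinerary, show $\supp(\widetilde{g_{j,1}})$ and the shifted $\supp(\widetilde{g_{j,2}})$ are separated by a positive gap determined by $n$ (the paper derives a gap of at least $q(n)+2C$ by observing that $\mathcal{G}_2 - \rho(g_{j,1})$ already lies in $[-q(n)-C,q(n)+C]$, whereas your looser triangle-inequality bookkeeping gives $\ge 2C+2$, which suffices), and then read off $(\dot g(j))_{|i(g)} = (g_{j,1})_{|i(g)}$ and invoke \eqref{eq:in-particular-kappa} for pairwise distinctness. The only cosmetic difference is that the paper packages the reconstruction into an explicit coordinate formula $g_{|i} = \dot g(j)_{|i}\,\dot g(j)_{|i+3q(n)+4C}$, while you describe the equivalent conjugate-and-multiply recipe.
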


\begin{proof}
  \eqref{enu:ball} Lemma~\ref{lem:itindecomposition} gives
    $l_S(g_{j,1}) + l_S(g_{j,2}) = \ell \le l_S(g) \leq n$, and it is
    clear that $l_S\big(t^{-3q(n)-4C} \big) \leq (3q(n) + 4C)l_S(t)$.

  \smallskip

  \eqref{enu:recover} Let $j \in [1,\ell]_\mathbb{Z}$, and write
  $\mathcal{G}_1 = \supp(g_{j,1})$,
  $\mathcal{G}_2 = \supp(g_{j,2})$.
  Lemma~\ref{lem:writing}\eqref{enu:lem-1} implies that the sets
  $\mathcal{G}_1$ and
  $\mathcal{G}_2 - \rho(g_{j,1}) = \supp \big(
    t^{\rho(g_{j,1})} g_{j,2} \big)$ lie wholly within the interval
  $[-q(n)-C,q(n)+C]_\mathbb{Z}$, hence
  \begin{equation}
    \label{eq:supp-H1-H2}
    \supp\! \big(\dot g(j) \big) = \mathcal{G}_1 \cupdot
    \big( \mathcal{G}_2 - \rho(g_{j,1}) + 3q(n)+4C \big)
  \end{equation}
  with a gap 
  \[
    \tau = \underbrace{\min\!\big( \mathcal{G}_2 - \rho(g_{j,1}) +
      3q(n)+4C \big)}_{\ge\, -q(n)-C+3q(n)+4C \, = \, 2q(n)+3C}
    - \underbrace{\max\!\big( \mathcal{G}_1 \big)}_{\le\,
      q(n)+C} \ge q(n) + 2C, 
  \]
  subject to the standard conventions $\min \varnothing = +\infty$ and
  $\max \varnothing = -\infty$ in special circumstances; see
  Figure~\ref{fig:hi} for a pictorial illustration.

  In contrast, gaps between two elements in
  $\mathcal{G}_1$ or two elements in
  $\mathcal{G}_2$ are strictly less
  than~$q(n)+2C \le \tau$.  Consequently, we can identify
  the two components in~\eqref{eq:supp-H1-H2} and thus
  $\mathcal{G}_1$ and
  $\mathcal{G}_2 - \rho(g_{j,1})$, without any prior
  knowledge of~$j$ or $g_{j,1}, g_{j,2}$.  Therefore, for
  each $i \in \mathbb{Z}$ the $i$th coordinate of $g$ satisfies
  \[
    g_{\vert i} =
    \begin{cases}
      \dot g(j)_{\vert i} \, \dot g(j)_{\vert \, i+3q(n)+4C} & \text{if
        $i \in [-q(n)-C,q(n)+C]$,} \\
      1 & \text{otherwise,}
    \end{cases}
  \]
  and hence $g$ can be recovered from~$\dot g(j)$.

  \smallskip
  
  \eqref{enu:pw-distinct} For $j_1, j_2 \in [1,\ell]_\mathbb{Z}$ with
  $j_1< j_2$ we conclude from our choice of the `reduced' product
  expression~\eqref{eq:tower} and its
  consequence~\eqref{eq:in-particular-kappa} that 
    \begin{multline*}
      \dot g(j_1)_{\vert i(g)} = \big( g_{j_1,1} \big)_{\vert i(g)}=
      \prod\nolimits_{k=1}^{\kappa(j_1)}
      \big( s_{\iota(k)} \big)_{\vert \, i(g)-\sigma(k-1)} \\
      \ne \prod\nolimits_{k=1}^{\kappa(j_2)} \big( s_{\iota(k)}
      \big)_{\vert \, i(g)-\sigma(k-1)} = \big( g_{j_2,1} \big)_{\vert
        i(g)} = \dot g(j_2)_{\vert i(g)}
  \end{multline*}
and hence $\dot g(j_1) \ne \dot g(j_2)$.
\end{proof}

For the proof of Proposition~\ref{prop:small-support} we
  now make a more careful choice of the non-decreasing unbounded
  function $f \colon \mathbb{N} \to \mathbb{R}_{>0}$, which entered
  the stage in Lemma~\ref{lem:small-coordinates}: we arrange that
  \[
    f \in o \big( n/q(n) \big) \quad \text{and} \quad f \in \omega
    \big( (\lambda+1)^{m(n)} \big) \quad \text{for
      $m(n) = \big( 3 q(n) + 4 C \big)\hspace{1pt}l_S(t)$},
  \]
  with $C = C(S)$ as in Lemma~\ref{lem:writing}\eqref{enu:lem-1}.  For
  instance, we can take $f = f_\alpha$ for any real parameter $\alpha$
  with $0 < \alpha < 1$, where
  $f_\alpha(n)=\max\left\{k^{\alpha}/q(k)\mid
    k\in[1,n]_{\mathbb{Z}}\right\}$ for $n\in\mathbb{N}$.
  Indeed, since $q(n) \in o(\log{n})$ and $q(n) \geq 1$
    for all $n \in \mathbb{N}$, each of these functions satisfies
    \[
      \lim_{n \rightarrow \infty} \frac{f_\alpha(n) q(n)}{n} \leq
      \lim_{n \rightarrow \infty} \frac{n^{\alpha} q(n)}{n} = 0.
    \]
    Furthermore, $q(n) \in o(\log n)$ implies
    $q(n)a^{q(n)} \in o(n^\beta)$ for all $a \in \mathbb{R}_{> 1}$ and
    $\beta \in \mathbb{R}_{> 0}$ so that
    \begin{align*}
      \lim_{n \rightarrow \infty} \frac{(\lambda +
      1)^{m(n)}}{f_\alpha(n)} %
      & \le \lim_{n \rightarrow \infty} \frac{q(n)(\lambda +
        1)^{m(n)}}{n^{\alpha}}  \\
      & = (\lambda +1)^{4 C \, l_S(t)} \lim_{n \rightarrow \infty}
        \frac{q(n)(\lambda +1)^{3 l_S(t) q(n)}}{n^\alpha} \\ 
      &   = 0.
    \end{align*}

\begin{proof}[Proof of Proposition \ref{prop:small-support}.]
We continue with the set-up established above; in
    particular, we make use of the refined choice of $f$.  In view
  of~Lemma \ref{lem:small-coordinates} it remains to show that
  \begin{equation*}
    \frac{\vert R_q(n) \smallsetminus R_q^f(n)\vert }{\vert
      B_S(n)\vert } \;\to\; 0 \qquad \text{as $n \to \infty$.}
  \end{equation*}
  We define a map
  \begin{align*}
    F_{n} \colon R_q(n) \smallsetminus R_q^f(n) & \,\rightarrow\, \mathcal{P}
                                                  \big(B_S(n+m(n)) \big) \\
    g & \,\mapsto\, \left\{ \dot g(j) \mid 1 \le j \le \ell_g \right\};
  \end{align*}
see~\eqref{eq:gdot} and
    Lemma~\ref{lem:big-coordinates}\eqref{enu:ball}.  From
  Lemma~\ref{lem:big-coordinates}\eqref{enu:recover} we deduce that
  $F_{n}(g_1) \cap F_{n}(g_2) = \varnothing$ for all
  $g_1, g_2 \in R_q(n) \smallsetminus R_q^f(n)$ with $g_1 \ne g_2$.
  In addition, from $\ell_g \geq f(n)$ and
  Lemma~\ref{lem:big-coordinates}\eqref{enu:pw-distinct} we deduce
  that $\vert F_{n}(g)\vert \geq f(n)$ for all
  $g \in R_q(n) \smallsetminus R_q^f(n)$.  This yields
  \[
    \big\vert  B_S(n+m(n)) \big\vert  \geq f(n) \, \big\vert  R_q(n)
    \smallsetminus R_q^f(n) \big\vert ,
  \]
  and hence, by submultiplicativity,
  \begin{equation*}
  \begin{split}
    \frac{\vert R_q(n) \smallsetminus R_q^f(n)\vert }{\vert B_S(n)\vert } \le
    \frac{\vert B_S(n+m(n))\vert }{f(n) \, \vert B_S(n)\vert } &\le \frac{\vert B_S(m(n))\vert }{f(n)} \\
    &\le \frac{(\lambda + 1)^{m(n)}}{f(n)} \;\to\; 0 \qquad \text{as
      $n \to \infty$.}
  \end{split}
  \end{equation*}
\end{proof}
  
\begin{remark}
  \label{rmk:small-support}
  Proposition~\ref{prop:small-support} can be established much more
  easily under the extra assumption that $H$ has sub-exponential word
  growth. Indeed, in this case, one can prove that
  \[
    \lim_{n \to \infty} \frac{\left\vert R_q(n)\right\vert }{\vert B_S(n)\vert } = 0
  \]
  for any non-decreasing unbounded function
  $q \colon \mathbb{N} \to \mathbb{R}_{>1}$ such that $q \in o(n)$;
  the proof is similar to the one of
  Lemma~\ref{lem:small-supp-general} below.

  If we assume that $H$ is finite, it is easy to see that there exists
  $\alpha \in \mathbb{R}_{>0}$ such that
  \[
    \lim_{n \to \infty} \frac{\left\vert R_{q}(n)\right\vert }{\vert B_S(n)\vert }
    = 0 \qquad \text{for $q \colon \mathbb{N} \to \mathbb{R}_{>1}$, $n
      \mapsto 1+\alpha n$.}
 \]
\end{remark}

Next we establish Theorem~\ref{thm:main-density}, using ideas that are
similar to those in the proof of Proposition~\ref{prop:small-support}:
again we work with perturbations of a given element $g$
  in such a manner that the original element can be retrieved easily.
  We begin with some preparations to establish an auxiliary lemma.

Fix a representative function $\mathcal{W}$ which yields for each
element of $G$ an $S$-expression of shortest possible length,
and fix an element $u \in H \smallsetminus \{1\}$.
Consider $g \in N$ with $\mathcal{W}$-itinerary $I = (\iota,\sigma)$,
viz.\ $I_g = (\iota_g,\sigma_g)$.  We put
\[
  \sigma^+ = \sigma_g^+ = \maxit_{\mathcal{W}}(g) \qquad \text{and}
  \qquad \sigma^- = \sigma_g^- = \minit_{\mathcal{W}}(g).
\]
For the time being, we suppose that
\begin{align*}
  k^+ = k_{\mathcal{W},g}^+ %
  & = \min \{ k \mid 0 \le k \le l_S(g) \text{ and } \sigma(k) =
    \sigma^+ \}, \\
  k^- = k_{\mathcal{W},g}^- %
  & = \min \{ k \mid 0 \le k \le l_S(g) \text{ and } \sigma(k) =
    \sigma^- \}
\end{align*}
satisfy $k^+\le k^-$.  We decompose the itinerary for $g$ as
$I = I_1 \ast I_2 \ast I_3$, where $I_1$, $I_2$, $I_3$ have lengths
$k^+$, $k^- - k^+$, $l_S(g)-k^-$; compare with
Lemma~\ref{lem:itindecomposition}.

If $x = x_{\mathcal{W},g}$, $y = y_{\mathcal{W},g}$,
$z = z_{\mathcal{W},g}$ denote the elements corresponding to $I_1$,
$I_2$, $I_3$ then $g = xyz$; observe that the lengths of
$I_1, I_2, I_3$ are automatically minimal, i.e, equal to
$l_S(x), l_S(y), l_S(z)$.  All this is illustrated schematically in
Figure~\ref{fig:g}.  Observe that $I_1$, associated to $x$, `starts'
at $0$ and `ends' at $\sigma^+$, the shifted $I_2$, associated to $y$,
`starts' at $\sigma^+$ and `ends' at $\sigma^-$, and the shifted
$I_3$, associated to $z$, `starts' at $\sigma^-$ and `ends' at $0$.

\begin{figure}[H]
  \centering
  \begin{tikzpicture}[x=0.75pt,y=0.75pt,yscale=-0.95,xscale=0.95]
    \draw (140,100) -- (570,100) ;
    \draw (350,97.5) -- (350,102.5) ;
    \draw (543.89,97.9) .. controls (543.89,93.3) and (541.56,90.9)
    .. (536.89,90.9) -- (444.65,90.9) .. controls (437.98,90.9) and
    (434.65,88.6) .. (434.65,83.93) .. controls (434.65,88.6) and
    (431.32,90.9) .. (424.65,90.9)(427.65,90.9) -- (332.42,90.9)
    .. controls (327.75,90.9) and (325.42,93.3) .. (325.42,97.9) ;
    \draw (403.5,95.75) .. controls (403.49,91.08) and (401.16,88.75)
    .. (396.49,88.76) -- (296.24,88.97) .. controls (289.57,88.99) and
    (286.23,86.67) .. (286.22,82) .. controls (286.23,86.67) and
    (282.91,89.01) .. (276.24,89.02)(279.24,89.01) -- (175.99,89.23)
    .. controls (171.32,89.24) and (168.99,91.57) .. (169,96.24) ;
    \draw (169,104.13) .. controls (169,108.8) and (171.33,111.13)
    .. (176,111.13) -- (341,111.13) .. controls (347.67,111.13) and
    (351,113.46) .. (351,118.13) .. controls (351,113.46) and
    (354.33,111.13) .. (361,111.13)(358,111.13) -- (537,111.13)
    .. controls (541.67,111.13) and (544,108.8) .. (544,104.13) ;
    \draw (168.9,97.5) -- (168.9,102.5) ;
    \draw (543.86,98.21) -- (543.86,103.21) ;

    \draw (346.8,102.8) node [anchor=north west][inner sep=0.75pt]
    [font=\tiny] [align=left] {$\displaystyle 0$};
    \draw (346,122) node [anchor=north west][inner sep=0.75pt]
    [font=\scriptsize] [align=left] {$\displaystyle y$};
    \draw (282,70) node [anchor=north west][inner sep=0.75pt]
    [font=\scriptsize] [align=left] {$\displaystyle z$};
    \draw (430.8,72) node [anchor=north west][inner sep=0.75pt]
    [font=\scriptsize] [align=left] {$\displaystyle x$};
      
    \draw (150,113) node [anchor=north west][inner sep=0.75pt]
    [font=\scriptsize] [align=left]
    {$\displaystyle \sigma^- = \minit(I)$};
    \draw (520,112) node [anchor=north west][inner sep=0.75pt]
    [font=\scriptsize] [align=left]
    {$\displaystyle \sigma^+ = \maxit(I)$};
  \end{tikzpicture}

  \caption{A schematic illustration of the decomposition $g=xyz$.}
  \label{fig:g}
\end{figure}
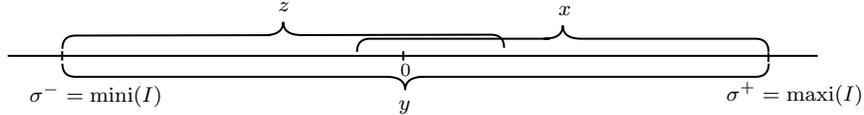

\vspace{5pt}

Next, we put to use the element $u \in H \smallsetminus\{1\}$ that was
fixed and define, for any given
$J \subseteq [\sigma^-,\sigma^+]_\mathbb{Z}$,
perturbations
\[
  \dot x(J) = \dot x_{\mathcal{W},g}(J,u), \qquad \dot y(J) = \dot
  y_{\mathcal{W},g}(J,u), \qquad \dot z(J) = \dot
  z_{\mathcal{W},g}(J,u)
\]
of the elements $x,y,z$ that are specified by
\begin{multline}
  \label{equ:rho-x-y-z}
  \rho(\dot x(J)) = \rho(x) = -\sigma^+, \quad \rho(\dot y(J)) =
  \rho(y) = -\sigma^- + \sigma^+, \\ \rho(\dot z(J)) = \rho(z) =
  \sigma^{-}
\end{multline}
and
\begin{equation}
  \label{equ:def-xyz-dot}
  \begin{split}
    \dot x(J)_{\vert i} & =
    \begin{cases}
      x_{\vert i} \, u & \text{for $i \in J_{\ge 0}$,} \\
      x_{\vert i} & \text{otherwise,}
    \end{cases}
    \\
    \dot y(J)_{\vert i}  & =
    \begin{cases}
      u^{\, -1} \, y_{\vert i} & \text{for $i \in \mathbb{Z}$ such
        that $i +\sigma^+ \in J_{\ge 0}$,} \\
       y_{\vert i} \, u^{\, -1} & \text{for $i \in \mathbb{Z}$ such
        that $i +\sigma^+ \in J_{< 0}$,} \\
       y_{\vert i} & \text{otherwise,}
    \end{cases}
    \\
    \dot z(J)_{\vert i} & =
    \begin{cases}
      u \, z_{\vert i}  & \text{for $i \in \mathbb{Z}$ such that
        $i + \sigma^- \in J_{<0}$,} \\
      z_{\vert i} & \text{otherwise,}
    \end{cases}
  \end{split}
\end{equation}
where we suggestively write $J_{\geq 0} = \{ j \in J \mid
  j \geq 0 \}$ and $J_{<0} = \{ j \in J \mid j < 0 \}$.  We observe that
\begin{equation}
  \label{eq:recover-g}
  g = \dot x(J) \, \dot y(J) \, \dot z(J).
\end{equation}

Let $C = C(S) \in \mathbb{N}$ be as is in
Lemma~\ref{lem:writing}\eqref{enu:lem-1}.   We call
\[
  \ddot g(J) = \dot x(J) \, t^{-2C} \,\dot y(J)^{-1} \, t^{-2C} \dot
  z(J)
\]
the \emph{$J$-variant of $g$}; see Figure~\ref{fig:g(I)}
  for a schematic illustration.

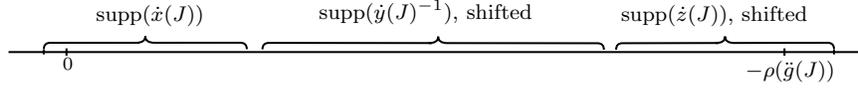
\begin{figure}[H]
  \centering
  \begin{tikzpicture}[x=0.37pt,y=0.37pt,yscale=-0.95,xscale=0.95]

    \draw (29,104) -- (949,104) ;
    \draw (91.4,100) -- (91.4,108) ;
    \draw (864,100) -- (864,108) ;
    \draw (285.42,100.96) .. controls (285.42,96.29) and
    (283.09,93.96) .. (278.42,93.96) -- (186.19,93.99) .. controls
    (179.52,93.99) and (176.19,91.66) .. (176.18,86.99) .. controls
    (176.19,91.66) and (172.86,93.99) .. (166.19,94)(169.19,94) --
    (73.95,94.03) .. controls (69.28,94.03) and (66.95,96.36)
    .. (66.96,101.03) ;
    \draw (917.35,100.98) .. controls (917.34,96.31) and (915,93.99)
    .. (910.33,94) -- (810.08,94.21) .. controls (803.41,94.22) and
    (800.08,91.9) .. (800.07,87.23) .. controls (800.08,91.9) and
    (796.75,94.24) .. (790.08,94.25)(793.08,94.24) -- (689.83,94.46)
    .. controls (685.16,94.47) and (682.84,96.8) .. (682.85,101.47) ;
    \draw (669.68,101.51) .. controls (669.68,96.84) and
    (667.35,94.51) .. (662.68,94.5) -- (501.36,94.33) .. controls
    (494.69,94.32) and (491.36,91.99) .. (491.36,87.32) .. controls
    (491.36,91.99) and (488.03,94.32) .. (481.36,94.31)(484.36,94.31)
    -- (309.27,94.13) .. controls (304.6,94.12) and (302.27,96.45)
    .. (302.27,101.12) ;
    \draw (66.88,101.5) -- (66.88,106.5) ;
    \draw (917.33,101.5) -- (917.33,106.5) ;

    \draw (84,107) node [anchor=north west][inner sep=0.75pt]
    [font=\tiny] [align=left] {$\displaystyle 0$};
    \draw (120,50) node [anchor=north west][inner sep=0.75pt]
    [font=\scriptsize] [align=left]
    {$\displaystyle \supp(\dot x( J))$};
    \draw (360,45) node [anchor=north west][inner sep=0.75pt]
    [font=\scriptsize] [align=left]
    {$\displaystyle \supp(\dot y(J)^{-1})$, shifted};
    \draw (685,50) node [anchor=north west][inner sep=0.75pt]
    [font=\scriptsize] [align=left]
    {$\displaystyle \supp(\dot z(J))$, shifted};
    \draw (819,110) node [anchor=north west][inner sep=0.75pt]
    [font=\scriptsize] [align=left]
    {$\displaystyle -\rho(\ddot g(J))$};
  \end{tikzpicture}
  \caption{A schematic illustration of the support components of
    $\ddot g(J)$.}
  \label{fig:g(I)}
\end{figure}

Observe that
\begin{equation}
  \label{eq:ddot-rho}
  \ddot g(J) \in N t^{\rho(\ddot g(J))}, \qquad \text{where
    $\rho(\ddot g(J)) = 2 ( \sigma_g^- - \sigma_g^+ ) - 4C
    \le -4$.}
\end{equation}
Up to now we assumed that $k^+ \le k^-$. If instead $k^- < k^+$, a
similar construction at this stage yields elements
\begin{equation}
  \label{eq:ddot-rho-minus}
  \ddot g(J) \in Nt^{\rho(\ddot g(J))}, \qquad
  \text{where
    $\rho(\ddot g(J)) = 2 (\sigma_g^+ - \sigma_g^- ) + 4C \ge 4$;}
\end{equation}
in particular, there is no overlap between elements $\ddot g(J)$
arising from these two different cases.

For our purposes, it suffices to work with subsets $J
  \subseteq [\sigma^-,\sigma^+]_\mathbb{Z}$ of size $\vert J \vert =
  2$ and  we streamline the discussion to this situation.

\begin{lemma}
  \label{lem:ddot-properties}
  In the set-up described above, suppose that
  $J \subseteq [ \sigma^-, \sigma^+]_\mathbb{Z}$ with
  $\vert J \vert = 2$.  Let $D = D(S,u) \in \mathbb{N}$ be as in
  Lemma~\ref{lem:writing}\eqref{enu:lem-2}. Then
  \begin{enumerate}[\rm (i)]
  \item\label{enu:ball-ddot} $ l_S(\ddot g(J)) \leq l_S(g) + D' $ for
    $D' = 6D + 2\, l_S \big( t^{2C} \big)$;
  \item \label{enu:recover-main} the element $g$ can be recovered from
    $\ddot g(J)$ and any one of $\sigma^+, \sigma^-$;    
  \item \label{enu:pw-distinct-main} the resulting variants of $g$ are
    pairwise distinct, i.e., $\ddot g(J) \ne \ddot g(J')$ for all
    $ J' \subseteq [ \sigma^-, \sigma^+]_\mathbb{Z}$ with
    $\vert J' \vert =2$ and $J \ne J'$.
  \end{enumerate}
\end{lemma}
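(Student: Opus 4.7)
The three parts are fairly independent, so I would attack them in the order (i), (ii), (iii), focusing on the case $k^+ \le k^-$ (the other being entirely parallel).

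For part (i), six applications of Lemma~\ref{lem:writing}(ii) should suffice. Since $\vert J \vert = 2$, each of $\dot x(J), \dot y(J), \dot z(J)$ differs from $x, y, z$ in at most two coordinates, through one-sided multiplications by factors $u^{t^j}$. A quick check shows the relevant indices $j$ lie within the ranges permitted by that lemma, since $J \subseteq [\sigma^-, \sigma^+]_\mathbb{Z}$ and the sub-itineraries $I_1, I_2, I_3$ attain $\sigma^+$ and $\sigma^-$. The resulting estimates combine to give
\[
  l_S(\dot x(J)) + l_S(\dot y(J)) + l_S(\dot z(J)) \le l_S(x) + l_S(y) + l_S(z) + 6D = l_S(g) + 6D,
\]
and absorbing the two interleaved $t^{-2C}$ factors via the triangle inequality then yields $l_S(\ddot g(J)) \le l_S(g) + 6D + 2 l_S(t^{2C}) = l_S(g) + D'$.

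For part (ii), I would collect powers of $t$ to the right in the definition of $\ddot g(J)$ to arrive at the standard form $\widetilde{\ddot g(J)} \, t^{\rho(\ddot g(J))}$. This writes $\widetilde{\ddot g(J)}$ as a product of three base-group pieces arising from $\dot x(J)$, $\dot y(J)^{-1}$, and $\dot z(J)$, each conjugated by an explicit power of $t$. Bounding the supports of $\widetilde x, \widetilde y, \widetilde z$ via Lemma~\ref{lem:writing}(i) and tracking the shifts introduced by the $t^{-2C}$ factors, one verifies, as in Figure~\ref{fig:g(I)}, that the three pieces have supports in three pairwise disjoint intervals whose endpoints depend only on $\sigma^+, \sigma^-$ and $C$. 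From $\ddot g(J)$ one reads off $\rho(\ddot g(J))$; its sign distinguishes the two cases via \eqref{eq:ddot-rho}--\eqref{eq:ddot-rho-minus}, and given one of $\sigma^\pm$ the other is then determined. The three intervals thereby become explicit, so one can recover the three base-group pieces separately, invert the middle one to obtain $\dot y(J)$, and reassemble $g = \dot x(J)\dot y(J)\dot z(J)$ via~\eqref{eq:recover-g}.

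For part (iii), suppose $\ddot g(J) = \ddot g(J')$ for $J, J' \subseteq [\sigma^-, \sigma^+]_\mathbb{Z}$ with $\vert J \vert = \vert J' \vert = 2$. The same support decomposition used in part (ii), applied to both sides (they share $\sigma^\pm$ because they come from the same $g$), yields $\dot x(J) = \dot x(J')$, $\dot y(J) = \dot y(J')$, and $\dot z(J) = \dot z(J')$. By~\eqref{equ:def-xyz-dot}, the coordinate equalities $\dot x(J)_{\vert i} = \dot x(J')_{\vert i}$ combined with $u \ne 1$ force $J_{\geq 0} = J'_{\geq 0}$, and similarly $\dot z(J) = \dot z(J')$ forces $J_{<0} = J'_{<0}$. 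Hence $J = J_{\geq 0} \cupdot J_{<0} = J'_{\geq 0} \cupdot J'_{<0} = J'$. The main obstacle is the support analysis in part~(ii): the $t^{-2C}$ factors are precisely calibrated so that, together with the sharp support bounds from Lemma~\ref{lem:writing}(i), the three base-group pieces inside $\widetilde{\ddot g(J)}$ land in disjoint intervals --- but the separation is only just wide enough to distinguish them once the boundary positions are known, which is exactly why the extra datum $\sigma^+$ (or $\sigma^-$) is indispensable in the statement.
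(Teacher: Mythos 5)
Your proposal is correct and takes essentially the same route as the paper's proof: you apply Lemma~\ref{lem:writing}\eqref{enu:lem-2} once per modified coordinate to get the length bound in (i), use the support bounds of Lemma~\ref{lem:writing}\eqref{enu:lem-1} together with the interleaved $t^{-2C}$ factors to isolate and recover the three base-group pieces in (ii), and invoke $u \neq 1$ together with \eqref{equ:def-xyz-dot} to separate $J$ from $J'$ in (iii). The details you defer --- checking that the relevant indices lie in the admissible range of Lemma~\ref{lem:writing}\eqref{enu:lem-2} for each of $I_1,I_2,I_3$, and that the three shifted support intervals are pairwise disjoint --- are exactly the verifications the paper spells out.
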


\begin{proof}
  \eqref{enu:ball-ddot} Since
  \begin{align*}
    J_{\ge 0} %
    & \subseteq [0,\sigma^+]_\mathbb{Z} \subseteq
      [\minit(I_1),\maxit(I_1)]_\mathbb{Z},\\ 
    J - \sigma^+ %
    & \subseteq [\sigma^- - \sigma^+,0]_\mathbb{Z} =
      [\minit(I_2),\maxit(I_2)]_\mathbb{Z} ,\\  
    J_{<0} - \sigma^- %
    & \subseteq  [0,-\sigma^-]_\mathbb{Z} \subseteq
      [\minit(I_3),\maxit(I_3)]_\mathbb{Z} 
  \end{align*}
  we can apply Lemma~\ref{lem:writing}\eqref{enu:lem-2}, if necessary
  twice, to deduce that
  \[
    l_S( \dot x(J)) \leq l_S(x)+2D, \quad l_S( \dot y(J)) \leq l_S(y)
    +2D, \quad l_S ( \dot z(J)) \leq l_S(z) +2D.
  \]
  Since $l_S(x) + l_S(y) + l_S(z) = l_S(g)$, this gives
  \[
    l_S(\ddot g(J)) \leq l_S(g) + D'  \qquad \text{for
      $D' = 6D + 2 \, l_S \big( t^{2C} \big)$}.
  \]

  \eqref{enu:recover-main} As in the discussion above suppose that
  $k^+ = k^+_{\mathcal{W},g}$ and $k^- = k^-_{\mathcal{W},g}$ satisfy
  $k^+ \le k^-$; the other case $k^- < k^+$ can be dealt with
  similarly.  We have to check that $g$ can be recovered
  from~$\ddot g(J)$, if we are allowed to use one of the parameters
  $\sigma^+, \sigma^-$.  Indeed, from
  $-\rho(\ddot g(J)) = 2 \big( \sigma^+ - \sigma^- \big) +4C$ we
  deduce that in such a case both, $\sigma^+$ and $\sigma^-$ are
  available to us.  Furthermore,
  Lemma~\ref{lem:writing}\eqref{enu:lem-1} gives
  \begin{align*}
    \supp\! \big( \dot x (J) \big) %
    & \subseteq [\sigma^-- C+1, \sigma^++ C-1]_\mathbb{Z}, \\
    \supp \!\big( \dot y(J)^{-1} \big)  %
    &  \subseteq [-C+1,\sigma^+ - \sigma^- +C-1]_\mathbb{Z}, \\
    \supp\!\big(\dot z(J)\big) %
    & \subseteq [-C+1, \sigma^+ -\sigma^-+C-1]_\mathbb{Z},
  \end{align*}
  and thus 
  \begin{multline*}
      \supp \!\big( \ddot g(J) \big)
       = \supp \!\big( \dot x (J) \big) 
      \cupdot \left( \supp \!\big( \dot y(J)^{-1} \big) +
        \sigma^+ + 2 C \right) 
      \\ \cupdot \left( \supp \!\big( \dot z(J) \big) +
        2 \sigma^+ - \sigma^- + 4C \right)
  \end{multline*}
  allows us to recover $\dot x(J)$, $\dot y(J)$ and
  $\dot z(J)$ via \eqref{equ:rho-x-y-z} and
  \begin{align*}
    \dot x(J)_{\vert i} & = %
    \begin{cases}
      \ddot g(J)_{\vert i} & \text{for $i \in [\sigma^-- C, \sigma^++
        C]_\mathbb{Z}$,} \\
      1 & \text{for
        $i \in \mathbb{Z} \smallsetminus [\sigma^-- C, \sigma^++
        C]_\mathbb{Z}$,}
    \end{cases}
    \\
    (\dot y(J)^{-1})_{\vert i} & = %
    \begin{cases}
      \ddot g(J)_{\vert  \, i +
        \sigma^+ + 2C} & \text{for $i \in [-C,
        \sigma^+-\sigma^-+C]_\mathbb{Z}$,} \\ 
      1 & \text{for
        $i \in \mathbb{Z} \smallsetminus [-C,
        \sigma^+-\sigma^-+C]_\mathbb{Z}$,}
    \end{cases}
    \\
    \dot z(J)_{\vert i} & = %
    \begin{cases}
      \ddot g(J)_{\vert  \, i + 2 \sigma^+ - \sigma^- + 4C} & 
      \text{for $i \in [-C, \sigma^+-\sigma^-+C]_\mathbb{Z}$,} \\
      1 & \text{for $i \in \mathbb{Z} \smallsetminus [-C,
        \sigma^+-\sigma^-+C]_\mathbb{Z}$.} 
    \end{cases}
  \end{align*}
  Using \eqref{eq:recover-g}, we recover~$g = \dot x(J) \, \dot y(J)
  \, \dot z(J)$.
  
  \smallskip

  \eqref{enu:pw-distinct-main} Again we suppose that
  $k^+ = k^+_{\mathcal{W},g}$ and $k^- = k^-_{\mathcal{W},g}$ satisfy
  $k^+ \le k^-$; the other case $k^- < k^+$ can be dealt with
  similarly.  Let $J' \subseteq [\sigma^-, \sigma^+]_\mathbb{Z}$
  with $\vert J' \vert = 2$ such that
  $\ddot g(J) = \ddot g(J')$.  As explained above, we can not only
  recover $g$ but even $\dot x(J) = \dot x(J')$,
  $\dot y(J) = \dot y(J')$ and $\dot z(J) = \dot z(J')$ from
  $\ddot g(J) = \ddot g(J')$ and $\sigma^+$, say.  Since $u \ne 1$ we
  deduce from \eqref{equ:def-xyz-dot} that $J = J'$.
\end{proof}

\begin{proof}[Proof of Theorem~\ref{thm:main-density}] We
    continue within the set-up established above; in particular, we
    employ the $J$-variants $\ddot g(J)$ of elements $g \in N$ for
    two-element subsets
    $J \subseteq [\sigma^-_g,\sigma^+_g]_\mathbb{Z}$, with respect to
    a fixed representative function $\mathcal{W}$ and a chosen element
    $u \in H \smallsetminus \{1\}$.

  Let $q \colon \mathbb{N} \to \mathbb{R}_{\ge 1}$ be a non-decreasing
  unbounded function such that $q \in o(\log n)$.  We make use of the
  decomposition
  \begin{equation}\label{equ:A-cap-B-decomp}
    N \cap B_S(n) = R_{q}(n) \cupdot R_{q}^\flat(n),
    \qquad \text{for $n \in \mathbb{N}$,} 
  \end{equation}
  where $R_{q}(n) = R_{\mathcal{W},q}(n)$ is defined as in
  Proposition~\ref{prop:small-support} and
  $R_{q}^\flat(n) = R_{\mathcal{W},q}^\flat(n)$ denotes the
  corresponding complement in $N \cap B_S(n)$.  Let
    $D' \in \mathbb{N}$ be as in
    Lemma~\ref{lem:ddot-properties}\eqref{enu:ball-ddot}.  Below we
    show that
  \begin{equation}
    \label{eq:D'-bound}
    \vert B_S(n + D')\vert  > \frac{q(n)}{2} \, \vert
    R_{q}^\flat(n)\vert  \qquad \text{for  $n \in \mathbb{N}$.}
  \end{equation}
  This bound and submultiplicativity yield
  \[    
    \frac{\vert R_{q}^\flat(n)\vert }{\vert B_S(n)\vert } < \frac{2
      \vert B_S(n + D')\vert }{q(n)\vert B_S(n)\vert } \le \frac{2
      \vert B_S(D')\vert }{q(n)} \;\to\; 0 \quad \text{as
      $n \to \infty$.}
  \]
  Together with Proposition~\ref{prop:small-support} we deduce from
  \eqref{equ:A-cap-B-decomp} that $N$ has density zero:
  \[
    \dens_S(N) = \lim_{n \to \infty}\frac{\vert
      N \cap B_S(n)\vert }{\vert B_S(n)\vert } = 0,
  \]
properly as a limit.

  \medskip
  
It remains to establish \eqref{eq:D'-bound}.  The set
  $R_{q}^\flat(n)$ decomposes into a disjoint union of subsets
  \[
    R_{q,\ell}^\flat(n) = \{ g \in N \cap B_S(n) \mid
    \sigma_g^+ - \sigma_g^- =\ell \}, \quad \ell > q(n),
  \]
  and the map
  \begin{align*}
    F_n \colon R_{q}^\flat(n) %
    & \to \mathcal{P} \!\left( B_S(n + D') \right), \\
    g  & \mapsto \big\{ \ddot g(J) \mid J \subseteq [\sigma_g^-,
         \sigma_g^+]_\mathbb{Z} \text{ with } \vert J\vert  =2 \big\}
  \end{align*}
  restricts for each $\ell \in \mathbb{N}$ with $\ell > q(n)$, to a
  mapping
  \[
    F_{n,\ell} \colon R_{q,\ell}^\flat(n) \to \mathcal{P} \!\left(
      \big( N t^{-2\ell -4C} \cup N
      t^{2\ell+4C} \big) \cap B_S(n + D') \right);
  \]
see Lemma~\ref{lem:ddot-properties}\eqref{enu:ball-ddot},
    \eqref{eq:ddot-rho} and~\eqref{eq:ddot-rho-minus}.
 
    We contend that for every
  $h \in \big( N t^{-2\ell -4C} \cup N
  t^{2\ell +4C} \big) \cap B_S(n + D'),$ where $\ell > q(n)$, there
  are at most $\ell+1$ elements $g \in R_{q,\ell}^\flat(n)$ such that
  $h \in F_n(g)$.  Indeed, suppose that
  $h \in N t^{2\ell +4C} \cap B_S(n + D')$, with
  $\ell > q(n)$, and suppose that $g \in R_{q,\ell}^\flat(n)$ such
  that $h = \ddot g(J)$ for some
  $J \subseteq [\sigma_g^-, \sigma_g^+]_\mathbb{Z}$ with
  $\vert J\vert =2$.  Then $\sigma_g^+ \in [0,\ell]_\mathbb{Z}$ takes
  one of $\ell +1$ values, and once $\sigma^+$ is fixed, there is a
  way of recovering~$g$, by
 Lemma~\ref{lem:ddot-properties}\eqref{enu:recover-main}.  For
  $h \in Nt^{-2\ell -4C} \cap B_S(n + D')$ the argument
  is similar. 
  
  From this observation and
    Lemma~\ref{lem:ddot-properties}\eqref{enu:pw-distinct-main} we
  conclude that
  \begin{multline*}
    \left\vert \big( N t^{-2\ell -4C} \cup N
      t^{2\ell+4C} \big) \cap B_S(n + D') \right\vert \ge
    \frac{1}{\ell+1} \binom{\ell +1}{2} \, \vert
    R_{q,\ell}^\flat(n)\vert \\ > \frac{q(n)}{2} \, \vert
    R_{q,\ell}^\flat(n)\vert .
  \end{multline*}
  Hence
  \[
    \vert B_S(n+D')\vert > \frac{q(n)}{2} \, \sum_{\ell > q(n)}
    \left\vert R_{q,\ell}^\flat(n) \right\vert = \frac{q(n)}{2} \,
    \left\vert R_{q}^\flat(n)\right\vert,
  \]
  which is the bound~\eqref{eq:D'-bound} we aimed for.
\end{proof}


\section{Proof of Theorem \ref{thm:main-2}}

Throughout this section let $G$ denote a finitely generated group of
exponential word growth of the form
$G= N \rtimes \langle t \rangle$, where
\begin{enumerate}[\rm (a)]
\item the subgroup $\langle t \rangle$ is infinite cyclic;
\item the normal subgroup
  $N = \langle \bigcup \big\{ H^{t^i} \mid i \in \mathbb{Z} \big\}
  \rangle$ is generated by the $\langle t \rangle$-conjugates of a
  finitely generated subgroup~$H$ $N$;
\item \label{enu:conj-commute} the $\langle t \rangle$-conjugates of
  this group $H$ commute elementwise:
  $\big[H^{t^i}, H^{t^j} \big] = 1$ for all $i, j \in \mathbb{Z}$ with
  $H^{t^i} \ne H^{t^j}$.
\end{enumerate}
Suppose further that
$S_0 = \{a_1, \dots, a_d\} \subseteq H$ is a finite
symmetric generating set for $H$ and that the exponential growth rates
of $H$ with respect to $S_0$ and of $G$ with respect to
$S = S_0 \cup \{ t, t^{-1} \}$ satisfy
\begin{equation}
  \label{eq:inequality-later}
  \lim_{n \rightarrow \infty} \sqrt[n]{\vert B_{H,S_0}(n)\vert } < \lim_{n
    \rightarrow \infty} \sqrt[n]{\vert B_{G,S}(n)\vert }. 
\end{equation}
This is essentially the setting of Theorem~\ref{thm:main-2}; for
technical reasons we prefer to work with symmetric generating sets.
Our ultimate aim is to show that $\dens_S(N)=0$.

Using the commutation rules recorded in~\eqref{enu:conj-commute}, it is not
difficult to see that every $g \in N$ admits $S$-expressions of
minimal length that take the special form
\begin{align}
  \label{eq:minimal-1}
  g & = t^{-\sigma^-} \cdot \bigg( \prod_{i=\sigma^-}^{\sigma^+-1} 
      \big( w_i(a_1, \dots, a_d) \, t^{-1} \big) \bigg) \cdot
      w_{\sigma^+}(a_1,\ldots,a_d) \cdot t^{\sigma^+}, \\
  \label{eq:minimal-2}
  g & = t^{-\sigma^+} \cdot \bigg(  \prod_{j=\sigma^-}^{\sigma^+ -1} 
      \big( w_{\sigma^+ + \sigma^- - j}(a_1, \dots, a_d) \, t \big) \bigg) \cdot
      w_{\sigma^-}(a_1,\ldots,a_d) \cdot t^{\sigma^-},
\end{align}

where the parameters $\sigma^-, \sigma^+ \in \mathbb{Z}$ satisfy
$\sigma^- \le \sigma^+$ and, for every
$i \in [\sigma^-,\sigma^+]_\mathbb{Z}$, we have picked a suitable
semigroup word $w_i = w_i(Y_1,\ldots,Y_d)$ in $d$ variables of length
$l_{S_0}(w_i(a_1,\ldots,a_d))$.  The lengths of the
expressions~\eqref{eq:minimal-1} and \eqref{eq:minimal-2} are equal to
\[
  l_S(g) = \vert \sigma^-\vert  + (\sigma^+ - \sigma^-) + \vert \sigma^+\vert  +
  \sum_{i=\sigma^-}^{\sigma^+} l_{S_0} \!\big(w_i(a_1,\ldots,a_d)
  \big).
\]

For the following we fix, for each $g \in N$, expressions
as described and we use subscripts to stress the dependency on~$g$: we
write $\sigma_g^-$, $\sigma_g^+$ and $w_{g,i}$ for
$i \in [\sigma_g^-,\sigma_g^+]_\mathbb{Z}$, where necessary.  The
notation is meant to be reminiscent of the one introduced in
Definition~\ref{def:itinerary}, but one needs to keep in mind that we
are dealing with a larger class of groups now.

\begin{lemma}
  \label{lem:small-supp-general}
In addition to the general set-up described above, let
  $q \colon \mathbb{N} \to \mathbb{R}_{>0}$ be a non-decreasing
  unbounded function such that $q \in o(n)$.  Then the
    sequence of sets 
  \[
    R_q(n) = \{ g \in N \cap B_S(n) \mid -q(n) \le \sigma_g^- \le
    \sigma_g^+ \le q(n) \},
  \]
  indexed by $n \in \mathbb{N}$, satisfies
  \[
    \lim_{n \to \infty} \frac{\vert R_q(n)\vert }{\vert B_S(n)\vert } = 0.
  \]
\end{lemma}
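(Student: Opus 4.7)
The plan is to bound $\vert R_q(n)\vert$ by reducing to a count of tuples of elements of $H$. Using the fixed minimal $S$-expressions~\eqref{eq:minimal-1}, each $g \in R_q(n)$ determines uniquely the data $\sigma_g^- \le \sigma_g^+$ in $[-q(n), q(n)]_{\mathbb{Z}}$ together with the tuple $(h_{g,\sigma_g^-}, \ldots, h_{g,\sigma_g^+})$, where $h_{g,i} = w_{g,i}(a_1, \ldots, a_d) \in H$; conversely, $g$ is recovered from these via~\eqref{eq:minimal-1}, so this assignment is injective. Moreover $\sum_i l_{S_0}(h_{g,i}) \le l_S(g) \le n$, whence
\[
\vert R_q(n)\vert \le \sum_{\sigma^-, \sigma^+} T_k,
\]
where the sum ranges over $\sigma^-, \sigma^+ \in [-q(n), q(n)]_{\mathbb{Z}}$ with $\sigma^- \le \sigma^+$, $k = \sigma^+ - \sigma^- + 1$, and $T_k$ counts the tuples $(h_{\sigma^-}, \ldots, h_{\sigma^+}) \in H^k$ with $\sum_i l_{S_0}(h_i) \le n$.

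For the tuple count I would exploit the strict inequality~\eqref{eq:inequality-later}. Set $\mu = \lambda_{S_0}(H)$ and $\lambda = \lambda_S(G)$, and fix $\varepsilon \in \mathbb{R}_{>0}$ with $\mu + \varepsilon < \lambda$. By submultiplicativity there is a constant $C_0 \ge 1$ with $\vert B_{H,S_0}(j)\vert \le C_0 (\mu + \varepsilon)^j$ for all $j \in \mathbb{N}_0$. Splitting the count according to the composition $(k_i = l_{S_0}(h_i))_i$ of $j = \sum_i k_i$ and then summing over $j \le n$,
\[
T_k \le \sum_{j=0}^{n} \binom{j+k-1}{k-1} C_0^{\,k} (\mu + \varepsilon)^j,
\]
which, after bounding the binomial coefficient by $\binom{n+k-1}{k-1}$ and summing the geometric series, yields $T_k \le C_1 \, C_0^{\,k} \binom{n + k - 1}{k - 1} (\mu + \varepsilon)^n$ for some constant $C_1 > 0$ independent of $k$ and $n$.

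Combining both estimates, and using that $k \le 2q(n) + 1$ while $(\sigma^-, \sigma^+)$ has at most $(2q(n)+1)^2$ possible values, I arrive at
\[
\vert R_q(n)\vert \le C_1 \, (2q(n)+1)^2 \, C_0^{\,2q(n)+1} \, \binom{n + 2q(n)}{2q(n)} (\mu + \varepsilon)^n.
\]
Since $q \in o(n)$, each of the three pre-factors grows sub-exponentially in $n$: the polynomial factor trivially, the power $C_0^{\,2q(n)+1}$ because $q(n)/n \to 0$, and the binomial coefficient by Lemma~\ref{lem:stirling}. Combined with $\vert B_S(n)\vert \ge \lambda^n$ and $\mu + \varepsilon < \lambda$, this gives $\vert R_q(n)\vert / \vert B_S(n)\vert \to 0$, as required.

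The main obstacle is the middle step: managing the sum over integer compositions so as to isolate a single exponential factor $(\mu + \varepsilon)^n$ and a single binomial coefficient in $k$ and $n$. Once that estimate is in hand, the injectivity step that sets up the tuple count and the final appeal to Lemma~\ref{lem:stirling} and~\eqref{eq:inequality-later} are routine.
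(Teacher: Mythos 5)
Your argument is correct and essentially the same as the paper's: both reduce $\vert R_q(n)\vert$ to counting tuples of elements of $H$ with total $S_0$-length at most $n$, bound $\vert B_{H,S_0}(m)\vert \le C_0(\mu+\varepsilon)^m$ using the strict inequality~\eqref{eq:inequality-later}, and control the resulting binomial factor via Lemma~\ref{lem:stirling}. The only difference is cosmetic: you keep $\sigma^-,\sigma^+$ as explicit parameters and sum over them (picking up the harmless extra factor $(2q(n)+1)^2$), whereas the paper simply extends the tuple to the full index range $[-\lfloor q(n)\rfloor,\lfloor q(n)\rfloor]_{\mathbb{Z}}$ and counts the $\binom{n+\tilde q(n)}{\tilde q(n)}$ compositions directly.
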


\begin{proof}
  For short we set $\mu = \lim_{n \to \infty}\sqrt[n]{\vert B_{H,S_0}(n)\vert }$
  and $\lambda = \lim_{n \to \infty} \sqrt[n]{\vert B_{G,S}(n)\vert }$.
  According to \eqref{eq:inequality-later} we find
  $\varepsilon \in \mathbb{R}_{> 0}$ such that
  $(\mu + \varepsilon)/\lambda \le 1 - \varepsilon$ and
  $M = M_\varepsilon \in \mathbb{N}$ such that
  \[
    \vert B_{H,S_0}(n)\vert  \le M (\mu+\varepsilon)^n \quad \text{for all
      $n \in \mathbb{N}_0$.}
  \]
  This allows us to bound the number of possibilities for the elements
  $w_{g,i}(a_1,\ldots,a_d)$ in an $S$-expression of the
  form~\eqref{eq:minimal-1} for $g \in R_q(n)$ and, writing
  $\tilde q(n) = 2 \lfloor q(n) \rfloor +1$, we obtain
  \begin{align*}
    \vert R_q(n)\vert %
    & \leq  \sum_{\substack{m_{-\lfloor q(n) \rfloor}, \ldots,
      m_{\lfloor q(n) \rfloor} \in \mathbb{N}_0 \text{ st}\\
    m_{-\lfloor q(n) \rfloor} + \ldots
    + m_{\lfloor q(n) \rfloor} \le n}} \;\; \prod_{i =
    -\lfloor q(n) \rfloor}^{\lfloor q(n) \rfloor} \vert
    B_{H,S_0}(m_i)\vert   \\
    & \leq \binom{n + \tilde q(n)}{\tilde q(n)} M^{\tilde q(n)} (\mu+\varepsilon)^n,
  \end{align*}
  and hence
  \begin{equation}
    \label{eq:polynomial}
    \frac{\vert R_q(n)\vert }{\vert B_S(n)\vert } \le \frac{\vert R_q(n)\vert }{\lambda^n}
    \le \binom{n+\tilde q(n)}{\tilde q(n)} M^{\tilde q(n)}
    (1-\varepsilon)^n \quad \text{for $n \in \mathbb{N}$.}
  \end{equation}
  We notice that $q \in o(n)$ implies $\tilde q \in o(n)$.  Thus
  Lemma~\ref{lem:stirling} implies that
  $\binom{n + \tilde q(n)}{\tilde q(n)} M^{\tilde q(n)}$ grows
  sub-exponentially, and the term on the right-hand side
  of \eqref{eq:polynomial} tends to $0$ as $n$ tends to infinity.
\end{proof}

\begin{proof}[Proof of Theorem \ref{thm:main-2}]
  We continue to work in the notational set-up introduced above.  In
  addition we fix a non-decreasing unbounded function
  $q \colon \mathbb{N} \to \mathbb{R}_{\ge 0}$ such that $q \in o(n)$
  and
    \begin{equation}\label{eq:from-lem-exists-q}
      \frac{\vert B_S(n)\vert }{\vert B_S(n-q(n))\vert } \to \infty
      \qquad \text{as $n \to \infty$};
    \end{equation}
    see Proposition~\ref{pro:exists-q}.  As in the proof of
  Theorem~\ref{thm:main-density}, we make use of a decomposition
  \begin{equation*}
    N \cap B_S(n) = R_{q}(n) \cupdot R_{q}^\flat(n),
    \qquad \text{for $n \in \mathbb{N}$,} 
  \end{equation*}
  where $R_{q}(n)$ is defined as in Lemma~\ref{lem:small-supp-general}
  and $R_{q}^\flat(n)$ denotes the corresponding complement in $N \cap B_S(n)$.

  In view of Lemma~\ref{lem:small-supp-general} it suffices to show
  that
  \begin{equation}
    \label{equ:R-flat-0}
    \frac{\vert R_{q}^\flat(n)\vert }{\vert B_S(n)\vert } \to 0 \quad \text{as $n \to
      \infty$.}
  \end{equation}
 It is enough to consider sufficiently large $n$ so that
    $n>q(n)$ holds.  For every such $n$ and $g \in R_{q}^\flat(n)$,
  with chosen minimal $S$-expressions~\eqref{eq:minimal-1} and
  \eqref{eq:minimal-2}, we have $\sigma^- = \sigma_g^- < - q(n)$ or
  $\sigma^+ = \sigma_g^+ > q(n)$, hence
  \[
    \left\{ g t^{-q(n)},  g t^{q(n)} \right\} \cap B_S(n-q(n)) \ne \varnothing.
  \]
  As each of the right translation maps $g \mapsto g t^{-q(n)}$ and
  $g \mapsto g t^{q(n)}$ is injective, we conclude that
  \[
    \vert R_{q}^\flat(n)\vert  \le 2 \vert B_S(n-q(n))\vert, 
  \]
  and thus  \eqref{equ:R-flat-0} follows
    from~\eqref{eq:from-lem-exists-q}.
\end{proof}

\medskip





\end{document}